\newtheorem{thmA}{Theorem}%[section]
\newtheorem{theorem}{Theorem}[section]
\newtheorem{lemma}[theorem]{Lemma}
\newtheorem{proposition}[theorem]{Proposition}
\newtheorem{corollary}[theorem]{Corollary}
\theoremstyle{definition}
\newtheorem{definition}{Definition}
\theoremstyle{remark}
\newtheorem{remark}[theorem]{Remark}
\newcommand{\newword}[1]{\emph{\textbf{#1}}}
\newcommand{\wt}{\ensuremath\mathrm{wt}}
\newcommand{\B}{\ensuremath\mathcal{B}}
\newcommand{\A}{\ensuremath\mathcal{A}}
\newcommand{\rex}{\operatorname{rex}}
\newcommand{\g}{\mathfrak{g}}
\newcommand{\I}{\mathcal{I}}
\newcommand{\ZZ}{\mathbb{Z}}
\newcommand{\E}{\mathcal{E}}
\newcommand{\F}{\mathcal{F}}
\DeclarePairedDelimiter{\floor}{\lfloor}{\rfloor}
\title{A Local Characterization of Unions of Demazure crystals}
\author[Assaf]{Sami Assaf}
\address{Department of Mathematics, University of Southern California, 3620 S. Vermont Ave., Los Angeles, CA 90089-2532, U.S.A.}
\email{shassaf@usc.edu}
\thanks{S.A. supported by NSF grant DMS-2246785 and Simons Foundation Award 953878.}
\author[Gonz\'{a}lez]{Nicolle Gonz\'{a}lez}
\address{Department of Mathematics, The University of British Columbia, Vancouver, Canada}
\email{nicolle@math.ubc.ca}
\date{\today}
\begin{document}

\begin{abstract}
  We characterize subsets of highest weight $\g$-crystals that arise as unions of Demazure crystals, for any symmetrizable Kac-Moody Lie algebra $\g$. We provide a local characterization for these subsets and prove they admit disjoint decompositions into Demazure atoms. As a consequence, we give a new characterization for when a subset of a highest weight crystal is a Demazure crystal
  as well as a crystal-theoretic proof that any Polo module admits a relative Schubert filtration.
\end{abstract}

\maketitle
\section{Introduction}

For $\g$ a symmetrizable Kac-Moody Lie algebra, the integrable irreducible highest weight $U(\g)$-modules $V(\lambda)$ are indexed by dominant integral weights $\lambda\in P^+$. Kashiwara \cite{Kas91} introduced crystal bases as set-theoretic abstractions of Lusztig's geometric canonical bases \cite{Lus90}. The \emph{highest weight crystal} $\B(\lambda)$ associated to $V(\lambda)$ is a combinatorial skeleton that contains important information such as the character and the irreducible decomposition.

For $w$ an element of the Weyl group $W$ and $\mathfrak{b} \subset \g$ a Borel subalgebra, the \emph{Demazure module} $V_w(\lambda)$ is the $U(\mathfrak{b})$-submodule of $V(\lambda)$ generated by the extremal weight vector of $V(\lambda)$ with weight $w \lambda$ \cite{Dem74}. 
Geometrically, Demazure modules arise 
as the dual of the space of global sections $\Gamma(X_w,\mathcal{L}_\lambda)$ of a suitable line bundle $\mathcal{L}_\lambda$ over the Schubert variety $X_w$.
The Demazure character formula, proved in various degrees of generality in \cite{Dem74a, Jos85, Andersen, Kumar-Demazure, Lit95, Kas93, Mathieu-Demazure}, generalizes the Weyl-Kac character formula. For classical types, Littelmann \cite{Lit95} proved the existence of a subset $\B_w(\lambda) \subseteq \B(\lambda)$ whose character corresponds to that of the Demazure module $V_w(\lambda)$. Kashiwara \cite{Kas93} extended the definition of Demazure crystals by lifting the Demazure operators to give an explicit construction for $\B_w(\lambda)$.  
Demazure crystals present similar advantages as highest weight crystals and, despite being widely studied \cite{besson, AG21,ADG, Assaf2,AG21b,blasiak2,spellman,wen,besson2,hong,Kou20,kouno2, Naoi12,Naoi12}, their structure is less developed.

In this paper, we characterize when a subset of a highest weight crystal is a Demazure crystal. Our main result can be stated as follows:
\begin{thmA}\label{thm:A}
     A subset $X$ of a highest weight crystal $\B$ is a Demazure crystal iff $X$ satisfies the following three conditions:
     \begin{itemize}
         \item[(E)] $X$ is \newword{extremal} if $X$ is nonempty and for any $i$-string $S$ of $\B$, $S \cap X$ is either $\varnothing$ or $S$ or $\{b\}$, where $e_i(b)=0$;
         \item[(I)] $X$ is \newword{ideal} if it is extremal and for extremal elements $x,y \in X$ with $x = e_{j}^* e_{i_1}^* \cdots e_{i_m}^* (y)$, we have $ f_{i_m}^* \cdots f_{i_1}^*(x) \in X \sqcup \{0\}$;
         \item[(P)] $X$ is \newword{principal} if it is extremal and for every connected component $X' \subseteq X$, there exists an extremal element $x \in X'$ such that for all extremal $y\in X'$, we have $\wt(y) \preceq \wt(x)$.
     \end{itemize}
\end{thmA}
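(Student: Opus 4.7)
The plan is to prove each direction by induction on the length of a Weyl group element.

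For the forward direction, that every Demazure crystal $\B_w(\lambda)$ satisfies (E), (I), and (P), I would induct on $\ell(w)$. The base case $\B_e(\lambda) = \{u_\lambda\}$ is immediate. For the inductive step, write $\B_w(\lambda) = \D_i \B_{w'}(\lambda)$ with $\ell(w) = \ell(w') + 1$, and verify that each of the three conditions is preserved under $\D_i$. Property (E) is preserved since $\D_i$ completes each singleton-top $i$-string of $\B_{w'}(\lambda)$ to a full $i$-string and does not alter the intersection with $j$-strings for $j \neq i$. Property (I) follows from the standard compatibility of the Kashiwara star operators $e_j^*, f_j^*$ with Demazure operators. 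Property (P) holds because the weight-maximum extremal element of $\B_w(\lambda)$ is the element of weight $w\lambda$, obtained from the corresponding one in $\B_{w'}(\lambda)$ by saturating the $i$-string.

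For the backward direction, suppose $X$ satisfies (E), (I), and (P). Using (P) for each connected component, extract a principal element $x^*$ of weight $w\lambda$ for some $w \in W$, and induct on $\ell(w)$. When $\ell(w) = 0$, extremality together with (P) force each component of $X$ to equal $\{u_\lambda\} = \B_e(\lambda)$. For the inductive step, choose a simple reflection $s_i$ with $\ell(s_i w) < \ell(w)$, and define $X' \subsetneq X$ by replacing every full $i$-string in $X$ with just its top element while leaving all singleton-top $i$-strings unchanged. Verify that $X'$ again satisfies (E), (I), and (P) with new principal weight $s_i w\lambda$, apply the inductive hypothesis to conclude $X' = \B_{s_i w}(\lambda)$, and then recover $X = \D_i(X') = \B_w(\lambda)$.

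The main obstacle is the inductive step of the backward direction, specifically verifying that the deflated set $X'$ inherits all three conditions. Property (E) for $X'$ follows from (E) for $X$ once one checks that deflating the $i$-strings does not leave any $j$-string ($j \neq i$) in a forbidden partial state, which requires using the structural interaction between $i$-strings and $j$-strings in $\B$. Property (P) for $X'$ requires verifying that the new principal element has weight exactly $s_i w \lambda$ and is still the unique weight-maximum extremal in its component; this rests on the $s_i$-action on weights together with the control over extremal elements afforded by (I). The trickiest point is (I) for $X'$: removing elements from $X$ could in principle break closure under iterated star operator sequences $f_{i_m}^* \cdots f_{i_1}^*$, so the argument must exploit the compatibility between the Kashiwara star operators and the deflation procedure, showing that the elements removed are inaccessible to star operator trajectories connecting extremal elements of $X'$. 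Once this compatibility is established, (I) for $X$ descends to (I) for $X'$ and the induction closes.
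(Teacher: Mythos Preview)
Your backward-direction induction has a genuine gap: the deflated set $X'$ need not satisfy (E). Concretely, take $\g=\mathfrak{sl}_3$, $\lambda=\omega_1+\omega_2$, and $w=s_1s_2s_1$, so $X=\B_w(\lambda)=\B(\lambda)$. Deflating by $i=1$ (keeping only $1$-string tops) gives $X'=\{b_\lambda,\,f_2b_\lambda,\,f_2f_1b_\lambda,\,f_2^2f_1b_\lambda\}$. But the $2$-string through $f_2f_1b_\lambda$ is $\{f_1b_\lambda,\,f_2f_1b_\lambda,\,f_2^2f_1b_\lambda\}$, and $X'$ meets it in $\{f_2f_1b_\lambda,\,f_2^2f_1b_\lambda\}$, which is neither empty, nor the full string, nor the singleton top. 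So $X'$ is not extremal, and your induction cannot proceed. The underlying reason is that $\B_{s_iw}(\lambda)$ is \emph{not} the set of $i$-highest elements of $\B_w(\lambda)$: it can contain elements (here $f_1b_\lambda$) that are not $i$-highest, because $\B_{s_iw}(\lambda)$ may already contain full $i$-strings before you apply $\D_i$. Thus there is no natural ``inverse Demazure operator'' on subsets that stays within the class of extremal (let alone ideal) subsets.

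The paper avoids this by never deflating. Instead it proves an intermediate structural result (Theorem~\ref{thm:C}): a subset is ideal if and only if it is a union $\bigcup_{w\in\I}\B_w(\lambda)$ over a lower order ideal $\I\subset W$. The key step is Lemma~\ref{lem:ideal}, which shows by induction on $\ell(w)$ that if an ideal subset $X$ contains the extremal element of weight $w\lambda$, then $\B_w(\lambda)\subseteq X$; the argument uses Bruhat subwords and Lemma~\ref{lem:path} relating $e^*$/$f^*$-paths between extremal elements to reduced expressions. Once ideal subsets are identified with unions of Demazure crystals, condition (P) simply says the defining order ideal $\I$ is principal, giving $X=\B_w(\lambda)$. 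Your forward direction is fine in outline (and the paper's version is similar, citing Kashiwara for (E) and arguing (I) and (P) via Bruhat order), but your backward induction needs to be replaced by something closer to this ``build up $\B_w(\lambda)$ inside $X$'' strategy rather than ``strip $X$ down.''
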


An immediate application of this characterization is that one can establish the Demazure nonnegativity of polynomials using Demazure crystals, parallel to the use of highest weight crystal to establish Schur positivity. Finding nonnegative expansions of various families of polynomials into Demazure characters is an active area of research \cite{Blasiak,per, Assaf,BucimasScrimshaw, miller,Assaf-weak,Gib21}.

Conversely, we can also use Demazure nonnegativity to establish whether a subset of a highest weight crystal whose character is a Demazure character is a Demazure crystal. This is especially useful since both the extremal and ideal conditions are local.

\begin{thmA}\label{thm:B}
    For a subset $X$ of a highest weight crystal whose character is a Demazure character, $X$ is a Demazure crystal if and only if $X$ is ideal.
\end{thmA}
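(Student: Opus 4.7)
The forward direction is immediate from Theorem~\ref{thm:A}: a Demazure crystal satisfies each of (E), (I), and (P), so in particular it is ideal. For the converse, my strategy is to combine the atom decomposition announced in the abstract—that every ideal subset of a highest weight crystal decomposes disjointly into Demazure atoms—with the linear independence of Demazure atom characters.

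Concretely, suppose $X \subseteq \B(\lambda)$ is ideal and $\mathrm{char}(X) = \kappa_w(\lambda)$ for some $w \in W$. Invoking the atom decomposition, I would write
\begin{equation*}
  X \;=\; \bigsqcup_{v \in S} \A_v(\lambda)
\end{equation*}
for some $S \subseteq W$. Taking characters then yields
\begin{equation*}
  \kappa_w(\lambda) \;=\; \mathrm{char}(X) \;=\; \sum_{v \in S} \mathrm{char}\bigl(\A_v(\lambda)\bigr),
\end{equation*}
which, combined with the classical expansion $\kappa_w(\lambda) = \sum_{v \leq w} \mathrm{char}(\A_v(\lambda))$, forces $S = \{v \in W : v \leq w\}$ by linear independence. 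The atom characters are linearly independent because each $\A_v(\lambda)$ carries a distinguished extremal element of weight $v\lambda$, so their characters have pairwise distinct leading monomials. Hence $X = \bigsqcup_{v \leq w} \A_v(\lambda) = \B_w(\lambda)$, a Demazure crystal.

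The principal obstacle is the atom decomposition for ideal subsets; this appears to be one of the paper's central structural contributions, advertised in the abstract and presumably established prior to Theorem~\ref{thm:B}. Once that is in hand, the remaining argument is pure character bookkeeping: there is no need to verify condition~(P) directly, to analyze connected components, or to perform any further crystal-combinatorial construction. A minor ancillary check is that the linear independence of atom characters holds in the full symmetrizable Kac--Moody generality of the paper, but this is standard given the distinct extremal weights $v\lambda$.
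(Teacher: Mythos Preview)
Your argument is correct, but it routes through more machinery than the paper actually uses. The paper's proof of Theorem~\ref{thm:B} does not invoke the atom decomposition or linear independence of atom characters at all. Instead, after applying Theorem~\ref{thm:C} to write $X = \B_{\I}(\lambda)$ for some lower order ideal $\I$, the paper simply observes that the character hypothesis forces $X$ to contain the (unique) extremal vector of weight $w\lambda$, whence $w \in \I$ and $\B_w(\lambda) \subseteq X$. Writing $X = \B_w(\lambda) \sqcup Y$, equality of characters then forces $Y = \varnothing$. This is a pure containment-plus-cardinality argument: once you know one set sits inside the other and they have the same character, you are done.

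Your approach is valid but strictly heavier: you factor through Theorem~\ref{thm:atom-positive} and then appeal to linear independence of atom characters, which is a fact the paper never isolates or proves (though your justification via distinct extremal weights $v\lambda$ for minimal representatives $v = \floor{v}^\lambda$ is fine). The upside of your route is that it makes the atom decomposition do visible work and would generalize to situations where one only knows the character lies in the $\mathbb{Z}_{\ge 0}$-span of atom characters rather than equaling a single Demazure character. The paper's route, by contrast, is shorter and self-contained once Theorem~\ref{thm:C} is in hand, avoiding any discussion of atoms entirely.
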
    

\emph{Extremal} subsets of a crystal, defined by condition (E) in Theorem~\ref{thm:A}, were originally defined by Kashiwara \cite{Kas94} and recently developed further by the authors and Dranowski \cite{ADG}. To establish Theorems \ref{thm:A} and \ref{thm:B} we study a large subclass of extremal subsets of highest weight crystals which we term \emph{ideal} and make up the heart of the paper. Ideal subsets are characterized by the local property (I) in Theorem \ref{thm:A}, and determined by a lower order ideal $\I$ of $W$ under strong Bruhat order. Ideal subsets are also geometrically well-motivated as their characters coincide with those of \emph{dual Polo modules}.   
Polo modules arise in the work of Polo, Mathieu, Joseph, Kumar, van der Kallen and others in their study of excellent filtrations of Borel modules; see \cite{vdk, Pol89, Mat89, Jos85, Jos03, Kumar-Demazure, Kumar-book}.
The \emph{dual Polo module} $V_{\I}(\lambda)$, is obtained by replacing the single Schubert variety in the dual Demazure module $\Gamma(X_w,\mathcal{L}_\lambda)$ with a \emph{union of Schubert varieties} $\Gamma(\bigcup_{w \in \I} X_w,\mathcal{L}_\lambda)$ and then dualizing. 
 In Theorem \ref{thm:ideal} we show that any ideal subset of $\B(\lambda)$ must be of the form $\B_\I(\lambda) := \bigcup_{w \in \I} \B_w(\lambda)$, and thus its character coincides with that of $V_\I(\lambda)$. As a consequence, we obtain a completely local characterization of unions of Demazure crystals. 
\begin{thmA}\label{thm:C}
    A subset $X$ of a highest weight crystal $\B$ is ideal if and only if it is a (nonempty) union of Demazure crystals.
\end{thmA}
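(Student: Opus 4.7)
The plan is to leverage Theorem~\ref{thm:ideal}, which supplies the forward direction, and then verify the reverse direction by hand. Theorem~\ref{thm:ideal} asserts that every ideal subset $X$ of $\B(\lambda)$ has the form $X = \B_\I(\lambda) = \bigcup_{w \in \I} \B_w(\lambda)$ for some lower order ideal $\I \subseteq W$ in strong Bruhat order, which is in particular a nonempty union of Demazure crystals. Thus only the converse remains: every nonempty union of Demazure crystals in $\B(\lambda)$ is ideal, i.e.\ satisfies both (E) and (I).

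For the converse, I would first reduce to unions indexed by lower order ideals. Given a nonempty union $X = \bigcup_{w \in S} \B_w(\lambda)$, the standard nesting $\B_v(\lambda) \subseteq \B_w(\lambda)$ whenever $v \leq w$ lets me enlarge $S$ to the lower order ideal $\I$ it generates without changing $X$, so $X = \B_\I(\lambda)$. Condition (E) is then essentially automatic: each individual Demazure crystal $\B_w(\lambda)$ intersects any $i$-string $S$ of $\B$ in either $\varnothing$, all of $S$, or the unique top element $\{b\}$ with $e_i(b)=0$. Taking the union over $w \in \I$, either some $\B_w(\lambda)$ already contains $S$ entirely, in which case $S \subseteq X$, or every nonempty intersection is the same singleton $\{b\}$, since the top of $S$ is unique. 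Hence $S \cap X \in \{\varnothing,\, S,\, \{b\}\}$ as required.

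The heart of the argument is condition (I). I would translate the starred operator condition into a Weyl group statement: the extremal elements of $\B_\I(\lambda)$ are indexed exactly by $\I$, and Kashiwara's starred operators $e_i^*, f_i^*$ act on these extremals as the corresponding simple reflections in $W$ (sending the extremal indexed by $v$ to the one indexed by $s_i v$, when the move is admissible). Given extremal $x, y \in X$ indexed by $w_x, w_y \in \I$ with $x = e_j^* e_{i_1}^* \cdots e_{i_m}^*(y)$, this chain unpacks into a chain of simple reflections in $W$ taking $w_y$ to $w_x$. The element $z := f_{i_m}^* \cdots f_{i_1}^*(x)$ then corresponds to undoing the last $m$ reflections, leaving only the final reflection $s_j$ applied to $w_y$. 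Since $\I$ is a lower order ideal and the index of $z$ is sandwiched between $w_y$ and $w_x$ in strong Bruhat order, it lies in $\I$ as well; hence $z \in X$.

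The principal obstacle is the dictionary between starred operators acting on extremal elements and simple reflections on the indexing Weyl group elements, and in particular ensuring that the identification of $z$ as an extremal element indexed by $s_j w_y$ (or the appropriate Bruhat-comparable translate) is correct along an arbitrary chain of starred raisings. This likely requires either a careful induction on $m$ or a dedicated lemma on the interaction of starred operators with Bruhat chains, possibly extending the extremal-subset machinery developed in \cite{ADG}. Once this translation is in place, the lower order ideal property of $\I$ closes the argument with minimal additional bookkeeping.
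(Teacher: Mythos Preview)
Your overall plan matches the paper's: the forward implication is exactly Theorem~\ref{thm:ideal}, and for the converse you verify (E) via Proposition~\ref{prop:extremal} together with Lemma~\ref{lem:extremal-union-intersections}, and then reduce (I) to a Bruhat-order statement about the Weyl-group indices of the extremal elements involved. This last reduction is precisely what the paper does in Proposition~\ref{prop:ideal}.

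There is, however, a concrete error in your computation of the index of $z$. From $x = e_{j}^{*} e_{i_1}^{*}\cdots e_{i_m}^{*}(y)$ one obtains (via Lemma~\ref{lem:path}) $w_y = s_{i_m}\cdots s_{i_1} s_{j}\, w_x$, with each left-multiplication increasing length. Applying $f_{i_1}^{*},\ldots,f_{i_m}^{*}$ to $x$ then left-multiplies $w_x$ successively by $s_{i_1},\ldots,s_{i_m}$, so the index of $z=f_{i_m}^{*}\cdots f_{i_1}^{*}(x)$ is $s_{i_m}\cdots s_{i_1}\, w_x$, \emph{not} $s_j w_y$. Starred lowering operators do not ``undo'' the starred raising operators here, because $f_{i_1}^{*}$ is applied to $x$, not to the element to which $e_{i_1}^{*}$ was applied; the intervening $e_j^{*}$ breaks the cancellation. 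Fortunately the correct index $s_{i_m}\cdots s_{i_1}\, w_x$ is still $\prec w_y$ by the Subword Property (delete $s_j$ from the reduced word $s_{i_m}\cdots s_{i_1} s_j$), so your sandwiching conclusion survives with the right computation --- and this is exactly the argument of Proposition~\ref{prop:ideal}.

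The paper packages the converse more modularly, which sidesteps the delicate dictionary you flag as the principal obstacle. It first proves that each single Demazure crystal $\B_w(\lambda)$ is ideal (Proposition~\ref{prop:ideal}, using the Bruhat computation above), and then observes that any union of ideal subsets is ideal (Lemma~\ref{lem:ideal-union}). The second step is nearly free: if $y$ lies in one of the constituents $\B_w(\lambda)$, then because $\B_w(\lambda)$ is extremal it is closed under $e_i^{*}$, so $x\in \B_w(\lambda)$ as well, and the ideal property of $\B_w(\lambda)$ alone already forces $z\in \B_w(\lambda)$. In particular there is no need to first pass to a lower order ideal $\I$, nor to track indices across the whole union.
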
 

Joseph \cite{Jos85} raised the question of when the tensor product of Demazure modules admits an \emph{excellent filtration}, in which successive quotients are Demazure modules. While Mathieu \cite{Mat89} showed this does not happen in general (see also \cite{Kou20, ADG}), Polo \cite{Pol89} refined the question to conjecture that the tensor product of Demazure modules admits a \emph{Schubert filtration}, in which successive quotients are Polo modules. Our Theorem~\ref{thm:C} establishes a local crystal-theoretic condition for when a module admits a Schubert filtration, thus providing a new tool for approaching Polo's conjecture. 

In a similar vein, we also relate ideal subsets to crystals for \emph{minimal relative Schubert modules}. 
Geometrically, these arise as the kernel of the restriction $\Gamma(X_w,\mathcal{L}_\lambda) \rightarrow \Gamma(\partial X_w,\mathcal{L}_\lambda)$, where $\partial X_w = \bigcup_{u \prec w} X_u$ denotes the \emph{boundary} of a Schubert variety $X_w$. A \emph{Demazure atom} is a subset of a crystal whose character coincides with the dual of the minimal relative Schubert module. In Theorem \ref{thm:atom-positive} we prove that all ideal subsets are disjoint unions of Demazure atoms. This decomposition is a combinatorial manifestation of the fact that a Borel module with a \emph{Schubert filtration}
necessarily also admits a \emph{relative Schubert filtration}, in which successive quotients are minimal relative Schubert modules; for finite types this was proven geometrically in \cite{vdk} and crystal-theoretically in \cite{Armon25}. 

In light of Theorem~\ref{thm:C}, an ideal subset $\B_\I(\lambda)$ is a single Demazure crystal precisely when the defining ideal $\I$ is principal. Culminating, in Theorem \ref{thm:demazure} we show how principality of the ideal translates to condition (P) above, thus yielding Theorems \ref{thm:A} and \ref{thm:B}.

The paper is structured as follows. In \S2, we establish definitions and notation for highest weight crystals, which we generalize to Demazure crystals in \S3. In \S4, we review extremal crystals and establish results relating paths in the crystal between extremal weights with Bruhat order. In \S5, we present our main results concerning ideal subsets, proving Theorem \ref{thm:C}. We then discuss atomic decompositions of ideal subsets in \S6, before returning to Demazure crystals in \S7 where we establish Theorems \ref{thm:A} and \ref{thm:B}.

\section{Highest weight crystals} Let $\g$ be any symmetrizable Kac-Moody Lie algebra and denote by $I$ the vertex set of the Dynkin diagram of $\g$, $P$ the weight lattice, $\{ \alpha_i \;|\; i \in I\} \subset P$ the simple roots, $\{ \alpha_i^\vee \;|\; i \in I\} \subset P^\vee$ the coroots, and $P^+= \{ \lambda \in P \;|\; \langle \alpha_i^\vee, \lambda \rangle \in \ZZ_{\geq 0} \;\forall i\}$ the set of dominant integral weights.

\begin{definition} A \newword{$\g$-crystal} is a set $\B$ endowed with the maps 
\[
  \wt : \B \rightarrow P, \hspace{1em}
  \varepsilon_i,\varphi_i : \B \rightarrow \mathbb{Z}, \hspace{1em}
  e_i,f_i  : \B \rightarrow \B \sqcup \{0\}  
\] 
subject to the following relations for all $i\in I$ and all $b,b'\in \B$,
\begin{itemize}
\item $\varphi_i(b)  = \langle \alpha^{\vee}_i , \wt(b) \rangle + \varepsilon_i(b)$; 
\item$b' = e_i(b)$ if and only if $b = f_i(b')$, in which case 
\[
\wt(b') = \wt(b) + \alpha_i, \quad \varepsilon_i(b) = \varepsilon_i(b') +1, \quad \varphi_i(b') = \varphi_i(b)+1
\]
\item $\varepsilon_i(b) = \max\{k \mid e_i^k(b)\in \B\}$, and
  $\varphi_i(b) = \max\{k \mid f_i^k(b)\in \B\}$.
\end{itemize}
\end{definition}
The \newword{crystal operators} $\{e_i,f_i\}_{i\in I}$ are also referred to as the raising and lowering operators, respectively. 

For any $i \in I$, an \newword{i-string} is a subset $S \subset \B$ of the form 
\[ S = \{ e_i^m(b) \neq 0, f_i^n(b) \neq 0 \;|\; m,n \geq 0\} \subset \B\]
for some $b \in \B$. Thus, for any $b \in \B$, we may consider the \emph{head} and \emph{tail} of its $i$-string and set: 
\[
  e_i^*(b)  =  e_i^{\varepsilon_i(b)}(b), \qquad \text{and} \qquad f_i^*(b)  = f_i^{\varphi_i(b)}(b) .
\]

Recall that an element $b \in \B$ is called a \newword{highest weight vector} (resp. \newword{lowest weight vector}) if $e_i(b) =0$ (resp. $f_i(b)=0$) for all $i \in I$.

The set of irreducible integrable highest weight $U(\g)$-modules are indexed by the set of dominant integral weights. Thus, for each $\lambda \in P^+$, let $\B(\lambda)$ denote the $\g$-crystal corresponding to the crystal basis of the irreducible $U(\g)$-module with highest weight $\lambda$.  Thus, $\B(\lambda)$ is connected with highest weight vector $b_\lambda$ such that $\wt(b_\lambda) = \lambda$. 

%%%%%%%%%%%%%%%%%%%%%%%%%%%%%%%%%%%%%%%%%%%%%%%%%%%%%%%%%%%%
%  Demazure
%%%%%%%%%%%%%%%%%%%%%%%%%%%%%%%%%%%%%%%%%%%%%%%%%%%%%%%%%%%%
\section{Demazure Crystals} 

The \newword{Weyl group} $W$ of $\g$ is generated by simple reflection $\{s_i \;|\; i \in I\}$. For any $w \in W$, an expression $w = s_{i_1}\dots s_{i_n}$ is \newword{reduced} if $n$ is minimal among all possible expressions for $w$. We write $\rex(w)$ for the set of all reduced expressions for $w$. 
For $w\in W$ and $s_{i_1} \cdots s_{i_{\ell}}\in \rex(w)$, define
\begin{equation*} \label{eq:F-definition}
\E_w = \bigcup_{m_i \in \mathbb{N}} \left\{e_{i_1}^{m_1} e_{i_2}^{m_2} \cdots e_{i_{\ell}}^{m_{\ell}}\right\}
\qquad \text{and}\qquad
  \F_w = \bigcup_{m_i \in \mathbb{N}} \left\{f_{i_1}^{m_1} f_{i_2}^{m_2} \cdots f_{i_{\ell}}^{m_{\ell}}\right\}.
\end{equation*} 
Regarded as subsets of mappings on $\B(\lambda)$, both $\E_w$ and $\F_w$ are independent of the choice of reduced expression \cite[\S2.10]{Jos03}. Thus, for any $X \subseteq B(\lambda)$ and $w \in W$, the sets $\E_w(X)$ and $\F_w(X)$ are well-defined.

\begin{definition}[\cite{Kas93}] \label{def:Dem}
  For $\lambda\in P^+$ and $w\in W$, the \newword{Demazure crystal}  $\B_w(\lambda)$ is
  \begin{equation} \label{eq:Dem-definition}
    \B_w(\lambda) = \F_{w} \{ b_\lambda \} .
  \end{equation}
\end{definition}

\begin{theorem}[\cite{Kas93}]\label{thm:kas-dem}
  The Demazure crystal $\B_w(\lambda)$ is well-defined and its character is the character of the Demazure module $V_{w}(\lambda)$.
\end{theorem}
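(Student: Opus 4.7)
The plan is to take the well-definedness of $\B_w(\lambda)$ for granted from the remark immediately preceding the theorem, since $\F_w$ is independent of the reduced expression chosen for $w$ and therefore so is the set $\F_w\{b_\lambda\}$. The character equality is the substantive content, and I would prove it by induction on the length $\ell(w)$. The base case $w = e$ is immediate: $\B_e(\lambda) = \{b_\lambda\}$ has character $e^\lambda = \text{char}(V_e(\lambda))$.

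For the inductive step, I would fix a reduced expression starting with a simple reflection $s_i$, writing $w = s_i w'$ with $\ell(w') = \ell(w) - 1$ (equivalently, $s_i w' > w'$ in Bruhat order). The factorization $\F_w = \{f_i^m\}_{m\geq 0}\cdot \F_{w'}$ then yields $\B_w(\lambda) = \{f_i^k b : b \in \B_{w'}(\lambda),\ 0 \leq k \leq \varphi_i(b)\}$, exhibiting $\B_w(\lambda)$ as the closure of $\B_{w'}(\lambda)$ under the lowering operator $f_i$. The goal reduces to showing $\text{char}(\B_w(\lambda)) = D_i \,\text{char}(\B_{w'}(\lambda))$, where $D_i$ denotes the classical Demazure character operator; combined with the inductive hypothesis and the standard factorization $D_w = D_i D_{w'}$, this yields $\text{char}(\B_w(\lambda)) = D_w(e^\lambda) = \text{char}(V_w(\lambda))$ by the representation-theoretic Demazure character formula cited in the introduction.

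The key structural input I would need is the following extremal-type property: for every $i$-string $S$ of $\B(\lambda)$, the intersection $S \cap \B_{w'}(\lambda)$ is either empty or consists solely of the head $e_i^*(b)$ of $S$; equivalently, every $b \in \B_{w'}(\lambda)$ satisfies $\varepsilon_i(b) = 0$. Granted this, the character of $\B_w(\lambda)$ splits as
\[
\sum_{b \in \B_{w'}(\lambda)} \sum_{k=0}^{\varphi_i(b)} e^{\wt(b) - k\alpha_i},
\]
and since $\varphi_i(b) = \langle \alpha_i^\vee, \wt(b)\rangle \geq 0$ at these heads, each inner sum coincides with $D_i(e^{\wt(b)})$. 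Linearity of $D_i$ then yields the desired identity.

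The hard part will be establishing this extremal property. I would proceed by a secondary induction on $\ell(w')$, analyzing how $e_i$ interacts with an element of the form $f_{i_2}^{m_2}\cdots f_{i_\ell}^{m_\ell}(b_\lambda)$ in $\B_{w'}(\lambda)$. The case where $i$ agrees with the leftmost color $i_2$ would have to be excluded by the Bruhat hypothesis $s_i w' > w'$, while the case $i \neq i_2$ requires the crystal-theoretic Serre relations to commute $e_i$ past the leftmost block of $f_{i_2}$'s and reduce to a shorter Demazure crystal. This intricate compatibility between the crystal operators and Bruhat order is the technical heart of Kashiwara's argument and would demand the most care.
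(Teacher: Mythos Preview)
The paper does not give its own proof of this statement; it is quoted from Kashiwara \cite{Kas93} and used as a black box. So there is nothing in the paper to compare against directly, only Kashiwara's original argument.

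Your inductive architecture is correct and is essentially Kashiwara's: write $w = s_i w'$ with $\ell(w') = \ell(w)-1$, recognize $\B_w(\lambda)$ as the $f_i$-saturation of $\B_{w'}(\lambda)$, and show this matches applying $D_i$ on characters. But the ``key structural input'' you state is false. It is not true that every $b \in \B_{w'}(\lambda)$ has $\varepsilon_i(b)=0$, even under the hypothesis $s_i w' > w'$. Concretely, in $\mathfrak{sl}_3$ with $\lambda = \omega_1+\omega_2$, take $w' = s_2 s_1$ and $i=1$ (so $s_1 w' = s_1 s_2 s_1 > w'$); then $f_1(b_\lambda) \in \B_{w'}(\lambda)$ yet $\varepsilon_1(f_1(b_\lambda))=1$. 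The correct string property, which is what Kashiwara actually proves and which this paper records as Proposition~\ref{prop:extremal}, has \emph{three} cases: $S \cap \B_{w'}(\lambda)$ is empty, or all of $S$, or the singleton head. Your two-case version drops the ``all of $S$'' possibility, and your proposed Serre-relation argument cannot establish the claim since the claim itself fails.

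This error propagates to the character computation. Your displayed double sum overcounts whenever $\B_{w'}(\lambda)$ already contains a full $i$-string, because the $f_i$-orbits of distinct elements of that string overlap inside $\B_w(\lambda)$. The repair is to argue string by string using the correct three-case property: if the intersection with $S$ is just the head, then $D_i$ sends $e^{\wt(\text{head})}$ to $\mathrm{char}(S)$; if the intersection is all of $S$, its character is already $s_i$-symmetric and $D_i$ fixes it. In either nonempty case the $f_i$-saturation is exactly $S$, and summing over strings yields $\mathrm{char}(\B_w(\lambda)) = D_i\,\mathrm{char}(\B_{w'}(\lambda))$ as you wanted.
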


The Weyl group $W$ is equipped with a \emph{length function} $\ell: W \rightarrow \mathbb{N}$ and \emph{Bruhat} partial order given by transitive closure of relations $u \prec u t$ for $t$ a reflection with $\ell(u) \le \ell(ut)$. We will often invoke the following Subword Property \cite[Theorem~2.2.2]{BB05}.

\begin{lemma}[\cite{BB05}]
    Let $s_{j_n} \cdots s_{j_1} \in \rex(w)$. Then $u \preceq w$ if and only if there exists $1 \le i_1 < \cdots < i_m \le n$ such that $s_{j_{i_m}} \cdots s_{j_{i_1}} \in \rex(u)$. 
\end{lemma}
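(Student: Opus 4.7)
The plan is to prove both implications by induction, using the Strong Exchange Property of Coxeter groups as the main tool. Recall that the Strong Exchange Property states: if $w = s_{j_n} \cdots s_{j_1}$ is a reduced expression and $t$ is a reflection with $\ell(wt) < \ell(w)$, then $wt = s_{j_n} \cdots \widehat{s_{j_k}} \cdots s_{j_1}$ for a unique index $k$, and this is again a reduced expression. I would first establish (or cite) this property, since both directions of the lemma hinge on it.

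For the forward direction, suppose $u \preceq w$. I would induct on $\ell(w) - \ell(u)$. The base case $\ell(w) = \ell(u)$ forces $u = w$, so the full expression is the desired subword. For the inductive step, by definition of Bruhat order, we may choose $u'$ with $u \preceq u' \prec w$ and $\ell(u') = \ell(w) - 1$, so $u' = wt$ for some reflection $t$. By the Strong Exchange Property applied to $s_{j_n} \cdots s_{j_1}$, the element $u'$ has a reduced expression obtained by deleting exactly one simple reflection $s_{j_k}$ from the chosen reduced expression for $w$. The inductive hypothesis applied to $u \preceq u'$ now furnishes a reduced subword for $u$ inside the reduced expression for $u'$, which in turn sits inside the reduced expression $s_{j_n} \cdots s_{j_1}$ for $w$.

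For the backward direction, suppose $s_{j_{i_m}} \cdots s_{j_{i_1}} \in \rex(u)$ for some $1 \le i_1 < \cdots < i_m \le n$. I would induct on $n - m$, building a chain $u = u_0 \preceq u_1 \preceq \cdots \preceq u_{n-m} = w$ in the Bruhat order. At each step, insert one of the missing simple reflections $s_{j_k}$ into its correct position in the current subword to produce $u_{r+1}$. If this insertion strictly increases length, then $u_r \prec u_{r+1}$ is already a Bruhat covering (multiplication by a reflection conjugate to $s_{j_k}$ that raises length is a covering relation). If instead the length drops or stays the same, apply the Strong Exchange Property to the current reduced expression for $u_{r+1}$ together with $s_{j_k}$: this produces an intermediate element still above $u_r$ via a chain of coverings. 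Iterating yields $u \preceq w$.

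The main obstacle is the bookkeeping in the backward direction: when a new letter is inserted into a reduced subword the resulting word need not itself be reduced, so one cannot literally multiply by a single reflection and invoke a covering relation. The cleanest way around this is to phrase the inductive step using the \emph{Lifting Property} (a consequence of Strong Exchange), which asserts that whenever $ws \prec w$, any $v \preceq w$ satisfies $vs \preceq w$ and either $v \preceq ws$ or $vs \preceq ws$; applying this to the leftmost missing letter reduces the claim to strictly fewer letters on both sides, closing the induction.
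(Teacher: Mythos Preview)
The paper does not prove this lemma; it simply cites it as the Subword Property from \cite{BB05} (Theorem~2.2.2 there) and uses it as a black box. Your outline is essentially the standard proof found in that reference, so there is nothing to compare against in the paper itself.

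Two small points of caution in your write-up. First, your statement of Strong Exchange is slightly off: from a reduced expression $s_{j_n}\cdots s_{j_1}$ for $w$ and a reflection $t$ with $\ell(wt)<\ell(w)$, one does get $wt=s_{j_n}\cdots\widehat{s_{j_k}}\cdots s_{j_1}$ for some $k$, but the resulting word is reduced (and $k$ unique) only when $\ell(wt)=\ell(w)-1$. In the forward direction you therefore cannot assume your intermediate $u'$ has $\ell(u')=\ell(w)-1$; that gradedness of Bruhat order is usually a \emph{consequence} of the Subword Property. The fix is easy: take any chain $u=u_0\prec\cdots\prec u_k=w$ from the definition, apply Strong Exchange to the last step to delete one letter, then invoke the Deletion Condition to pass to a reduced subword before inducting. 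Second, in the backward direction, if you want to use the Lifting Property, make sure you establish it directly from Strong Exchange rather than from the Subword Property itself, to avoid circularity; this is routine but worth flagging.
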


Note that the converse of the property is not true. That is, when $u \preceq w$ it is not the case that every reduced expression for $u$ appears as a subword of some reduced expression for $w$.

It is important to note that different pairs $(\lambda, w)$ and $(\lambda,w')$ may generate the same Demazure crystal.
Namely, for each $\lambda \in P^+$ consider  $W_\lambda$, the stabilizer subgroup of $\lambda$ in $W$, and denote by $\floor{w}^\lambda$ 
the minimal 
length coset representative of $wW_\lambda$. Then, for any $v \in wW_\lambda$ we have $\B_w(\lambda) = \B_v(\lambda)$. Furthermore, containment of Demazure modules is as follows \cite[Prop.~3.4]{ADG}.

\begin{lemma}[\cite{ADG}]\label{lem:elements}
    Let $u,w \in W$ and $\lambda\in P^+$. Then $\B_u(\lambda) \subseteq \B_w(\lambda)$ if and only if $\floor{u}^\lambda \preceq w$. 
\end{lemma}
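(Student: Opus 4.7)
The plan is to prove the two directions separately: the backward direction is a short application of the Subword Property, while the forward direction requires an induction on $\ell(w)$ built on the recursive structure of Demazure crystals and the extremality of the Weyl-translate elements $b_v := f_{i_1}^* \cdots f_{i_k}^* b_\lambda$ for $s_{i_1}\cdots s_{i_k} \in \rex(\floor{v}^\lambda)$.

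For the backward direction, first reduce to the case $u = \floor{u}^\lambda$ using the established fact that elements of the same coset of $W_\lambda$ yield identical Demazure crystals. Given $u \preceq w$, the Subword Property supplies a reduced expression $s_{j_1} \cdots s_{j_n} \in \rex(w)$ containing a subexpression $s_{j_{k_1}} \cdots s_{j_{k_m}} \in \rex(u)$. Since the excerpt notes $\F_u$ is independent of the chosen reduced expression, we may realize $\F_u\{b_\lambda\} \subseteq \F_w\{b_\lambda\}$ by taking exponent $0$ on every factor of $\F_w$ whose index lies outside $\{k_1,\ldots,k_m\}$.

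For the forward direction, again reduce to $u = \floor{u}^\lambda$ and observe that it suffices to show $b_u \in \B_w(\lambda)$ implies $u \preceq w$, since $b_u \in \B_u(\lambda) \subseteq \B_w(\lambda)$. I would induct on $\ell(w)$; the base case $\ell(w) = 0$ forces $b_u = b_\lambda$, hence $u \in W_\lambda$ and $\floor{u}^\lambda = e$. For the inductive step, pick a reduced $w = s_j w'$ with $\ell(w') = \ell(w) - 1$, so that $\B_w(\lambda) = \bigcup_{m\geq 0} f_j^m \B_{w'}(\lambda)$. If $b_u \in \B_{w'}(\lambda)$, induction gives $u \preceq w' \prec w$. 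Otherwise $e_j(b_u) \neq 0$; extremality of $b_u$ forces $\langle \alpha_j^\vee, u\lambda\rangle < 0$, so $\ell(s_j u) = \ell(u) - 1$. The $j$-string of $b_u$ must meet $\B_{w'}(\lambda)$, and the extremality condition (E) on Demazure crystals compels the head $b_{s_j u}$ to lie in $\B_{w'}(\lambda)$. The inductive hypothesis then yields $\floor{s_j u}^\lambda \preceq w'$, and prepending $s_j$ to the corresponding subword inside a reduced expression for $w' $ produces a reduced subword for $u$ inside a reduced expression for $w = s_j w'$, giving $u \preceq w$ via the Subword Property.

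The principal obstacle is a Coxeter-theoretic claim needed to glue the induction: when $u = \floor{u}^\lambda$ and $\langle \alpha_j^\vee, u \lambda\rangle < 0$, one must show that $s_j u$ remains the minimum-length representative of its coset $s_j u\, W_\lambda$ so that the inductive hypothesis can be applied with $v := s_j u$ and then inverted to recover $u = s_j \floor{s_j u}^\lambda$ with $\ell(u) = \ell(\floor{s_j u}^\lambda) + 1$. This follows from the dominance of $\lambda$ together with the exchange condition, analyzing which simple reflections in $W_\lambda$ could shorten $s_j u$ from the right; once this is secured, all remaining manipulations are routine bookkeeping with reduced expressions.
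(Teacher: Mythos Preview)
The paper does not prove this lemma; it is quoted verbatim from \cite[Prop.~3.4]{ADG} with no argument given, so there is no in-paper proof to compare your proposal against.

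That said, your argument is correct. The backward direction is the standard subword trick. For the forward direction, your induction on $\ell(w)$ using the recursion $\B_{s_j w'}(\lambda)=\bigcup_{m\ge 0} f_j^m\,\B_{w'}(\lambda)$ together with the extremality of $\B_{w'}(\lambda)$ (Proposition~\ref{prop:extremal}) is exactly how one typically proves this; the case split on whether $b_u\in\B_{w'}(\lambda)$ is clean, and your deduction that $e_j(b_u)\neq 0$ forces $u^{-1}\alpha_j<0$ (hence $\ell(s_ju)=\ell(u)-1$) is correct because $\lambda$ is dominant and the pairing is strictly negative.

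The Coxeter-theoretic point you flag is genuine but easy: $W_\lambda$ is a standard parabolic subgroup, and it is a basic fact about minimal coset representatives for $W/W_J$ that if $u$ is minimal and $s_ju<u$ then $s_ju$ is again minimal. Concretely, $u$ minimal means $u\alpha_k>0$ for every simple $\alpha_k$ with $k\in J$; since $s_j$ sends only $\alpha_j$ to a negative root and $u^{-1}\alpha_j<0\neq\alpha_k$, we get $(s_ju)\alpha_k=s_j(u\alpha_k)>0$, so $s_ju$ is minimal. With $\floor{s_ju}^\lambda=s_ju\preceq w'$ in hand, prepending $s_j$ to the relevant reduced subword gives a reduced expression for $u$ inside one for $w=s_jw'$, and you are done.
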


%%%%%%%%%%%%%%%%%%%%%%%%%%%%%%%%%%%%%%%%%%%%%%%%%%%%%%%%%%%%
%  Extremal
%%%%%%%%%%%%%%%%%%%%%%%%%%%%%%%%%%%%%%%%%%%%%%%%%%%%%%%%%%%%
\section{Extremal crystals}

For each $\lambda \in P^+$, the set of \newword{extremal weights} $\mathcal{O}(\lambda)$ is the orbit of $\lambda$ under the action of $W$. Thus, an element $b \in \B(\lambda)$ is an \newword{extremal weight vector} if $\wt(b) \in \mathcal{O}(\lambda)$, i.e. if $\wt(b) = w\lambda$ for some $w \in W$. In particular, two extremal weights vectors satisfy $u\lambda \leq w \lambda$ in dominance order if and only if $w\preceq u$ in Bruhat order.

Note that extremal weight spaces are one-dimensional, which is to say for each $\alpha\in\mathcal{O}(\lambda)$, there is a unique $b\in\B$ such that $\wt(b)=\alpha$.

\begin{definition}\label{def:extremal}
   A subset $X$ of a $\g$-crystal $\B$ is \newword{extremal} if $X$ is nonempty and for any $i$-string $S$ of $\B$, $S \cap X$ is either $\varnothing$ or $S$ or $\{b\}$, where $e_i(b)=0$. 
\end{definition}

\begin{remark} \label{rem:extremal}
When $\B = \B(\lambda)$ for some $\lambda \in P^+$, if $X\subset \B(\lambda)$ is extremal and $e_i(b) \neq 0$ for some $b \in X$, then $e_i^*(b) \in X$. Iterating, this implies the highest weight vector $b_\lambda$ lies in $X$. In particular, $X \subset \B(\lambda)$ must be connected. Thus, more generally, any extremal subset $X \in \B$ decomposes as a disjoint union of connected components, $ X = \bigsqcup_{\lambda \in \mathcal{J}} X \cap \B(\lambda)$ for some $\mathcal{J} \subset P^+$, each of which is extremal, with $b_\lambda \in X$ for all $\lambda \in \mathcal{J}$. 
\end{remark}

Evidently, if $X \subset \B(\lambda)$ and $Y \subset \B(\mu)$ with $\lambda \neq \mu$ then $X \cap Y = \emptyset$. Thus, generically, we obtain the following (see also \cite[Lemma 4.1]{Armon25}).

\begin{lemma}\label{lem:extremal-union-intersections}
    Unions and nonempty intersections of extremal subsets are extremal.
\end{lemma}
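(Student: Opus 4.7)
The plan is to reduce everything to a direct case analysis on what $S \cap X$ can be for a single $i$-string $S$. By Definition 4.1, for an extremal subset $X$ and any $i$-string $S$, the intersection $S \cap X$ lies in the three-element set $\{\varnothing,\, \{h_S\},\, S\}$, where $h_S$ is the unique element of $S$ with $e_i(h_S)=0$. So after fixing $S$, proving the lemma becomes a finite check on ordered pairs (or families) of such intersections.

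For the union, given extremal subsets $X, Y \subseteq \B$ and an $i$-string $S$, I would compute
\[
S \cap (X \cup Y) = (S \cap X) \cup (S \cap Y),
\]
and verify that every combination of elements of $\{\varnothing,\{h_S\},S\}$ unions to an element of $\{\varnothing,\{h_S\},S\}$: any occurrence of $S$ forces the union to be $S$, two copies of $\varnothing$ give $\varnothing$, and the remaining cases produce $\{h_S\}$. Nonemptiness of $X \cup Y$ is immediate from nonemptiness of $X$. The same reasoning applies verbatim to an arbitrary (possibly infinite) union of extremal subsets since the three possibilities for $S\cap X_\alpha$ are closed under arbitrary union.

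For the intersection, I would argue similarly: $S \cap (X \cap Y) = (S\cap X)\cap(S\cap Y)$, and here the key observation is that $h_S$ is \emph{unique}, so whenever both $S\cap X$ and $S\cap Y$ are singletons they are forced to be the same singleton $\{h_S\}$. Thus every pairwise intersection lands again in $\{\varnothing,\{h_S\},S\}$, and by induction (or the same argument for arbitrary families) this persists under arbitrary intersections. The nonemptiness hypothesis is exactly what is needed to satisfy the first clause of Definition 4.1; without it one could end up with $\varnothing$, which is explicitly excluded.

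There is no real obstacle in the proof; the only substantive ingredient is the uniqueness of the head $h_S$ of an $i$-string, which makes the three possibilities for $S \cap X$ closed under both union and intersection. I would simply state both claims together, give the pairwise table (or a one-line observation that $\{\varnothing,\{h_S\},S\}$ is a sublattice of the Boolean lattice on $S$), and remark that nonemptiness is assumed for intersections and automatic for unions.
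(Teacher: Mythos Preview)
Your proposal is correct and follows essentially the same approach as the paper: both arguments reduce to the distributive identities $S\cap(X\cup Y)=(S\cap X)\cup(S\cap Y)$ and $S\cap(X\cap Y)=(S\cap X)\cap(S\cap Y)$ and then observe that the three admissible forms $\varnothing,\{h_S\},S$ are closed under union and intersection. Your version is slightly more explicit about the role of the uniqueness of $h_S$ in the intersection case and extends the statement to arbitrary families, but the core idea is the same.
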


\begin{proof}
Suppose $X$ and $Y$ are extremal subsets. Since both are nonempty, so is $X \cup Y$. Given any $i$-string $S \subset \B(\lambda)$, since both $X$ and $Y$ are extremal and $S \cap (X \cup Y) = (S \cap X) \cup (S \cap Y)$
then $X \cup Y$ is extremal.

Now, assuming $X \cap Y \neq \emptyset$, there must exist $\lambda \in P^+$ such that $b_\lambda \in X \cap Y$. So then given any $i$-string $S \subset \B(\lambda)$ and $S \cap (X \cap Y) = (S \cap X) \cap (S \cap Y)$, then as before we obtain that $X \cap Y$ is extremal.
\end{proof}

Combining \cite[Proposition~3.2.3(ii)]{Kas93} and \cite[Proposition~3.3.5]{Kas93} shows Demazure crystals are extremal subsets.

\begin{proposition}[\cite{Kas93}]\label{prop:extremal}
    For $\lambda\in P^{+}$ and $w\in W$, the subset $\B_w(\lambda) \subseteq \B(\lambda)$ is extremal.
\end{proposition}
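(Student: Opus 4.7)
The plan is to proceed by induction on $\ell(w)$. The base case $w=e$ gives $\B_e(\lambda)=\{b_\lambda\}$, which is extremal since $e_i(b_\lambda)=0$ for every $i\in I$: every $i$-string meets $\{b_\lambda\}$ either in $\varnothing$ or in the top $\{b_\lambda\}$ itself.

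For the inductive step, choose a reduced expression $w = s_i v$ with $\ell(v)=\ell(w)-1$, so that $\B_w(\lambda)=\F_{s_i}(\B_v(\lambda))$. The inductive hypothesis gives that $\B_v(\lambda)$ is extremal, and so it suffices to prove the lemma that if $Y\subseteq\B(\lambda)$ is extremal then $\F_{s_i}(Y)$ is extremal. Given a $j$-string $S$ of $\B(\lambda)$, I need to verify that $S\cap\F_{s_i}(Y)$ is one of $\varnothing$, $\{t\}$, or $S$, where $t$ denotes the top of $S$.

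The case $j=i$ is straightforward: any $x=f_i^m(b)\in S\cap\F_{s_i}(Y)$ with $b\in Y$ has $b$ lying in the same $i$-string as $x$, namely $S$, so $b\in S\cap Y$; extremality of $Y$ forces $S\cap Y$ to be either $S$ or $\{t\}$, and in either subcase the top $t$ of $S$ lies in $Y$, so closing under $f_i$ yields $S=\{t,f_i(t),f_i^2(t),\ldots\}\subseteq\F_{s_i}(Y)$. Thus $S\cap\F_{s_i}(Y)=S$.

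The main obstacle is the case $j\neq i$, since $e_j$ and $f_i$ do not commute on a general Kashiwara crystal, and therefore one cannot directly transport the $j$-string structure of $Y$ along $f_i^m$. To handle this, the plan is to restrict the configuration to the connected $\{i,j\}$-subcrystal of $\B(\lambda)$ through $x$. Such a subcrystal is an integrable crystal for the rank-two Levi with Cartan integers $\langle\alpha_i^\vee,\alpha_j\rangle$ and $\langle\alpha_j^\vee,\alpha_i\rangle$, and since $f_i$ preserves it, intersection with the subcrystal commutes with $\F_{s_i}$. The statement then reduces to a purely rank-two claim about $\F_{s_i}$ preserving extremality, which can be settled by a case analysis on the Cartan integers (covering the finite types $A_1\times A_1$, $A_2$, $B_2/C_2$, $G_2$, together with the symmetrizable affine/hyperbolic rank-two cases). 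This rank-two verification is the technical heart of the proof, and is essentially the content of the two Kashiwara results cited in the proposition.
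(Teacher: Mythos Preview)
The paper does not give an argument here: it simply cites two propositions from Kashiwara's 1993 paper and asserts that their combination yields the extremal property. Your proposal, by contrast, tries to sketch an actual proof, and the inductive skeleton you set up is correct: the base case is fine, and the reduction to the lemma ``$Y$ extremal $\Rightarrow \F_{s_i}(Y)$ extremal'' is exactly the right intermediate statement (this is in fact how Kashiwara organizes his argument). Your treatment of the $j=i$ case is also correct.

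The gap is in the $j\neq i$ case. Your plan is to restrict to the connected $\{i,j\}$-subcrystal and then dispatch the resulting rank-two statement ``by a case analysis on the Cartan integers.'' This does not work in the generality of the paper. For a symmetrizable Kac--Moody algebra the rank-two Levi can be any symmetrizable rank-two algebra, and there are infinitely many hyperbolic ones (one for each pair $(a,b)$ with $ab\ge 4$); their highest weight crystals are infinite and their combinatorics is not classified in any way that would permit a finite case check. So ``case analysis'' is not a proof here --- you would need a uniform argument, and you do not supply one.

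Your closing remark, that the rank-two verification ``is essentially the content of the two Kashiwara results cited in the proposition,'' makes the proposal circular: you have reduced the proposition to itself (restricted to rank two). Moreover, it mischaracterizes Kashiwara's method: his proof of Propositions~3.2.3(ii) and~3.3.5 in \cite{Kas93} does \emph{not} proceed by rank-two reduction but uses the global (canonical) basis of $V(\lambda)$ and the compatibility of Demazure modules with that basis. The string property for $\B_w(\lambda)$ is genuinely a theorem about the quantum group, not a combinatorial fact one can read off from rank-two crystal pictures. If you want a self-contained crystal-theoretic argument, you would need to invoke something like Littelmann's path model or an equivalent uniform construction, not a type-by-type check.
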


By Lemma~\ref{lem:elements}, if we restrict our attention to extremal weight elements, the crystal structure reduces to Bruhat order, provided we take minimal length coset representatives.

This gives a means to establish paths in the crystal between certain extremal weight elements as follows.

\begin{lemma}\label{lem:path}
 Let $x,y \in \B(\lambda)$ be extremal weight vectors with $\wt(x) = u \lambda$ and $\wt(y) = w\lambda$.
 Then 
    $y = f_{j_m}^* \cdots f_{j_1}^*(x)$ if and only if $s_{j_m} \cdots s_{j_1} \in \rex(\floor{wu^{-1}}^{u\lambda})$. In particular, this implies $\wt(x) \ge \wt(y)$.
\end{lemma}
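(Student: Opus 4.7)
The plan is to analyze the action of the starred lowering operator $f_j^*$ on extremal weight vectors and then induct on $m$. The key structural fact I will need is that for an extremal weight vector $x$ with $\wt(x) = u\lambda$, the $j$-string through $x$ connects the extremal weights $u\lambda$ and $s_j u\lambda$. Combined with the one-dimensionality of extremal weight spaces, this yields the dichotomy: either $\langle \alpha_j^\vee, u\lambda\rangle > 0$, whence $x$ sits at the head of its $j$-string and $f_j^*(x)$ is the extremal vector of weight $s_j u\lambda$; or $\langle \alpha_j^\vee, u\lambda\rangle \leq 0$, whence $\varphi_j(x) = 0$ and $f_j^*(x) = x$.

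For direction $(\Leftarrow)$, set $v_k = s_{j_k}\cdots s_{j_1}$ and $x_k = f_{j_k}^*\cdots f_{j_1}^*(x)$, with $v := v_m = \floor{wu^{-1}}^{u\lambda}$. By induction on $k$, I will show $x_k$ is the extremal vector of weight $v_k u\lambda$. The critical input is that $v_k u\lambda \neq v_{k-1}u\lambda$ for every $k$: otherwise $s_{j_k}$ lies in $\mathrm{stab}(v_{k-1}u\lambda) = v_{k-1}W_{u\lambda}v_{k-1}^{-1}$, so deleting $s_{j_k}$ from $v$ produces a strictly shorter representative of the coset $vW_{u\lambda}$, contradicting minimality. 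Combined with $\ell(v_k) = k$ (since $v$ is reduced, every suffix is reduced), this forces $\langle \alpha_{j_k}^\vee, v_{k-1}u\lambda\rangle > 0$, so by the structural fact $f_{j_k}^*$ advances $x_{k-1}$ to the extremal vector of weight $s_{j_k} v_{k-1}u\lambda = v_k u\lambda$. Hence $x_m$ has weight $vu\lambda = w\lambda$, and $x_m = y$ by uniqueness of extremal weight vectors.

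For direction $(\Rightarrow)$, taking the convention that each $f_{j_k}^*$ genuinely advances the current vector (consistent with the use of $f^*$ paths in condition (I) of Theorem~\ref{thm:A}), the structural fact yields $\wt(x_k) = v_k u\lambda$ with $v_k u\lambda \neq v_{k-1}u\lambda$ at every step. Taking $u$ as the minimum-length representative of $uW_\lambda$ (permissible since the lemma only depends on $u\lambda$ and $w\lambda$), the strict dominance drop $v_{k-1}u\lambda > v_k u\lambda$ forces $\ell(v_k u) = \ell(v_{k-1}u) + 1$, so each $v_k u$ covers $v_{k-1}u$ in Bruhat order and $s_{j_m}\cdots s_{j_1}$ is a reduced expression with $v_m u = \floor{w}^\lambda$. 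The main obstacle will be identifying the length $m$ with $\ell(\floor{wu^{-1}}^{u\lambda})$: for this, I would appeal to the coset description $wu^{-1}W_{u\lambda} = wW_\lambda u^{-1}$ together with the backward direction already proved, which shows the minimum length in this coset equals the length of any cover chain from $\floor{u}^\lambda$ to $\floor{w}^\lambda$, namely $m$. The concluding inequality $\wt(x) \geq \wt(y)$ then follows immediately from the strict dominance decrease $u\lambda > v_1 u\lambda > \cdots > v_m u\lambda = w\lambda$.
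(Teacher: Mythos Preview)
Your overall strategy matches the paper's: both analyze $f_j^*$ on extremal vectors and induct on $m$, and your structural dichotomy is exactly what the paper uses implicitly. The substantive gap is in your $(\Leftarrow)$ direction, at the step where you assert that $v_k u\lambda\neq v_{k-1}u\lambda$ together with $\ell(v_k)=k$ forces $\langle\alpha_{j_k}^\vee, v_{k-1}u\lambda\rangle>0$. Knowing $\ell(s_{j_k}v_{k-1})>\ell(v_{k-1})$ only says that $v_{k-1}^{-1}\alpha_{j_k}$ is a positive root; the sign of the pairing is governed instead by whether $(v_{k-1}u)^{-1}\alpha_{j_k}$ is positive, since $\langle\alpha_{j_k}^\vee,v_{k-1}u\lambda\rangle=\langle(v_{k-1}u)^{-1}\alpha_{j_k}^\vee,\lambda\rangle$ and it is $\lambda$, not $u\lambda$, that is dominant. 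These conditions differ. Concretely, in $\mathfrak{sl}_3$ with $\lambda=\omega_1$, $u=s_1$, $v=s_1$, one checks that $v$ is the minimal element of $vW_{u\lambda}=\{s_1,s_2s_1\}$, that $\ell(v_1)=1$, and that $v_1u\lambda=\omega_1\neq s_1\omega_1=v_0u\lambda$, yet $\langle\alpha_1^\vee,s_1\omega_1\rangle=-1<0$. (This example also shows the lemma, read literally, needs the hypothesis $\floor{u}^\lambda\preceq\floor{w}^\lambda$; the paper quietly inserts ``$u\prec w$'' in its inductive step.)

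The fix is to control $\ell(v_k u)$ rather than $\ell(v_k)$. The paper's opening observation $\floor{wu^{-1}}^{u\lambda}=\floor{w}^\lambda(\floor{u}^\lambda)^{-1}$, with its implicit length additivity $\ell(v)=\ell(\floor{w}^\lambda)-\ell(\floor{u}^\lambda)$, guarantees that concatenating $s_{j_m}\cdots s_{j_1}$ with a reduced word for $\floor{u}^\lambda$ yields a reduced word for $\floor{w}^\lambda$; hence $\ell(v_k u)=k+\ell(u)$, so $(v_{k-1}u)^{-1}\alpha_{j_k}>0$, and combined with your nonvanishing argument the pairing is strictly positive. You already deploy this style of reasoning in your $(\Rightarrow)$ direction; it is needed in $(\Leftarrow)$ as well, and once inserted your argument goes through along the same lines as the paper's.
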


\begin{proof}
We begin by observing that for any $u \preceq w$ we have $\floor{w u^{-1}}^{u\lambda} = \floor{w}^\lambda (\floor{u}^\lambda)^{-1}$.

    Now, proceed by induction on the length of $\floor{w u^{-1}}^{u\lambda}$. If $\floor{w u^{-1}}^{u\lambda}$ has length $0$, then $\floor{w}^\lambda = \floor{u}^\lambda$ and $y = x$, so the result trivially holds. 

    So suppose $u \prec w$ with $\ell(\floor{w u^{-1}}^{u\lambda})=m+1>0$. Assume the result holds for any extremal element of weight $w' \lambda$ with $\ell(\floor{w' u^{-1}}^{u\lambda})\leq m$.  Suppose $y = f_i^* f_{j_m}^* \cdots f_{j_1}^*(x)$ and let $y' = f_{j_m}^* \cdots f_{j_1}^*(x)$, which is extremal of weight $s_i w \lambda$. By induction, $s_{j_m} \cdots s_{j_1} \in \rex(\floor{s_iw u^{-1}}^{u\lambda})$, and thus $s_i s_{j_m} \cdots s_{j_1} \in \rex(\floor{w u^{-1}}^{u\lambda})$.

    Conversely, suppose $s_i s_{j_m} \cdots s_{j_1} \in \rex(\floor{w u^{-1}}^{u\lambda})$. Then $s_{j_m} \cdots s_{j_1} \in \rex(\floor{s_iw u^{-1}}^{u\lambda})$ so by induction, $y' = f_{j_m}^* \cdots f_{j_1}^*(x)$ is the extremal weight vector of weight $s_iw u^{-1}$. Therefore, $y = f_i^* f_{j_m}^* \cdots f_{j_1}^*(x)$ must also be the extremal weight vector of weight $w u^{-1}$.
\end{proof}

%%%%%%%%%%%%%%%%%%%%%%%%%%%%%%%%%%%%%%%%%%%%%%%%%%%%%%%%%%%%
%  Ideal
%%%%%%%%%%%%%%%%%%%%%%%%%%%%%%%%%%%%%%%%%%%%%%%%%%%%%%%%%%%%
\section{Ideal subsets of highest weight crystals} 

In this section we introduce and characterize an intermediate family of subsets which interpolate between Demazure and extremal subsets and coincide with unions of Demazure crystals over lower order ideals. 

\begin{definition}\label{def:ideal}
  An extremal subset $X \subseteq \B$ is \newword{ideal} if for extremal elements $x,y \in X$ with $x = e_{j}^* e_{i_1}^* \cdots e_{i_m}^* (y)$, we have $ f_{i_m}^* \cdots f_{i_1}^*(x) \in X$.
\end{definition}

The ideal condition of Definition~\ref{def:ideal} states that if a sequence of lowering operators stays within $X$, then so does the given subsequence; see Fig.~\ref{fig:Dem4}. 

\begin{figure}[ht]
\begin{tikzcd}[cramped,column sep=small, row sep=1em]
      && x \arrow[dl,red, "f_{j}^*" red,thick,swap]\\
      & \bullet  \arrow[dr, blue, "f_{i_1}^*" blue,thick] &&& \\
       && \bullet \arrow[dr, dashed]&&\\
      &&&\bullet \arrow[dr, teal, "f_{i_m}^*" teal,thick]&\\
      &&&& y  
\end{tikzcd}
$\qquad \Longrightarrow \qquad$
\begin{tikzcd}[cramped,column sep=small, row sep=1em]
      && x \arrow[dl,red, "f_{j}^*" red,thick,swap] \arrow[dr, blue, "f_{i_1}^*" blue,thick] \\
      & \bullet  \arrow[dr, blue, "f_{i_1}^*" blue,thick] &&\bullet \arrow[dr, dashed]& \\
       && \bullet \arrow[dr, dashed]&&\bullet  \arrow[dr, teal, "f_{i_m}^*" teal,thick]\\
      &&&\bullet \arrow[dr, teal, "f_{i_m}^*" teal,thick]&&\bullet\\
      &&&& \bullet  
\end{tikzcd}
\caption{An illustration of the ideal condition: if $x,y \in X$ with $y = f_{i_m}^* \cdots f_{i_1}^* f_{j}^*(x)$, then $f_{i_m}^* \cdots f_{i_1}^*(x)\in X$.}\label{fig:Dem4}
\end{figure}
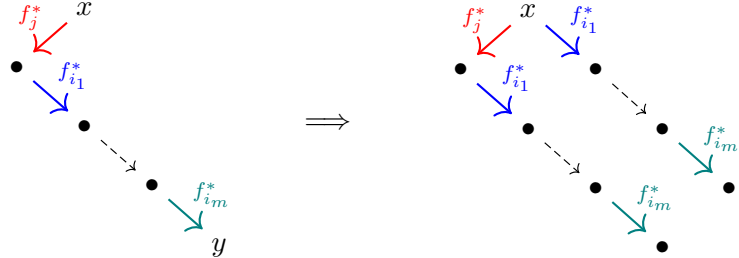

It is easy to see that the ideal condition is closed under unions.

\begin{lemma}\label{lem:ideal-union}
    The union of ideal subsets is an ideal subset.
\end{lemma}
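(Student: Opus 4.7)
My plan is to verify both defining properties of Definition~\ref{def:ideal} for $X \cup Y$, given that $X$ and $Y$ are themselves ideal. The extremality of $X \cup Y$ is already handled by Lemma~\ref{lem:extremal-union-intersections}, so the substance of the argument lies in checking the ideal condition.

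The key tool is the observation, essentially recorded in Remark~\ref{rem:extremal}, that any extremal subset is closed under the raising operators $e_i^*$: if $Y$ is extremal and $y \in Y$, then $e_i^*(y) \in Y$ for every $i \in I$. Indeed, Definition~\ref{def:extremal} leaves only three possibilities for $S \cap Y$ where $S$ is the $i$-string through $y$, and the hypothesis $y \in S \cap Y$ excludes the empty option. Whether $S \cap Y = S$ (in which case $e_i^*(y) \in S \subseteq Y$) or $S \cap Y = \{\text{top of }S\}$ (in which case $y$ is already the top and $e_i^*(y) = y \in Y$), we land in $Y$. Iterating, $Y$ is closed under any composition $e_{k_1}^* e_{k_2}^* \cdots e_{k_r}^*$.

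To finish, take extremal $x, y \in X \cup Y$ with $x = e_j^* e_{i_1}^* \cdots e_{i_m}^*(y)$, and by symmetry assume $y \in Y$. Propagating the closure observation along the $e^*$-chain from $y$ up to $x$ forces every intermediate vertex, and in particular $x$ itself, to lie in $Y$. The ideal hypothesis on $Y$ then applies directly to the pair $(x,y)$ to yield $f_{i_m}^* \cdots f_{i_1}^*(x) \in Y \subseteq X \cup Y$, as required. I do not anticipate any real obstacle; the only point that needs careful articulation is the uniform treatment of the two nonvacuous cases in the extremal dichotomy, which is what makes the ``closure under $e_i^*$'' step go through without case analysis.
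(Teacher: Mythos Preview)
Your proof is correct and follows essentially the same approach as the paper: both use Lemma~\ref{lem:extremal-union-intersections} for extremality of the union, then observe that extremal subsets are closed under the operators $e_i^*$ (so that if $y$ lies in one piece, so does $x$), and conclude by applying the ideal axiom to that piece. You spell out the ``closure under $e_i^*$'' step in more detail than the paper (which simply cites extremality), but the strategy is identical.
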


\begin{proof}
By Remark \ref{rem:extremal}, without loss of generality, we may assume $X,Y \subset \B(\lambda)$ for some $\lambda \in P^+$. By Lemma \ref{lem:extremal-union-intersections} we know that $X \cup Y$ is extremal.

Now observe that if both $y$ and $x = e_{j}^* e_{i_1}^* \cdots e_{i_m}^*(y)$ are extremal elements in $X \cup Y$, by symmetry say $y \in X$, then since $X$ is extremal then $x \in X$, and so by the ideal axiom for $X$, $f_{i_m}^* \cdots f_{i_1}^*(x) \in X \subset X \cup Y$. 
\end{proof}

Demazure crystals, by construction, satisfy the ideal condition.

\begin{proposition}\label{prop:ideal}
    For $\lambda\in P^{+}$ and $w\in W$, the subset $\B_w(\lambda) \subseteq \B(\lambda)$ is ideal.
\end{proposition}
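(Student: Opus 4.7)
The plan is to reduce the ideal condition, a statement about chains of starred crystal operators, to a Bruhat-order statement in $W/W_\lambda$ via Lemma~\ref{lem:path}, and then invoke the containment criterion of Lemma~\ref{lem:elements}. Extremality of $\B_w(\lambda)$ is already given by Proposition~\ref{prop:extremal}, so only the ideal axiom remains.

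First, suppose $x, y \in \B_w(\lambda)$ are extremal with $\wt(x) = u\lambda$ and $\wt(y) = v\lambda$, and $x = e_j^* e_{i_1}^* \cdots e_{i_m}^*(y)$. Set $u_0 = \floor{u}^\lambda$ and $v_0 = \floor{v}^\lambda$, so Lemma~\ref{lem:elements} yields $u_0, v_0 \preceq w$. Rewriting the hypothesis as $y = f_{i_m}^* \cdots f_{i_1}^* f_j^*(x)$ and applying Lemma~\ref{lem:path}, the word $s_{i_m} \cdots s_{i_1} s_j$ is a reduced expression for $\floor{vu^{-1}}^{u\lambda} = v_0 u_0^{-1}$. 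Setting $\rho := s_{i_m} \cdots s_{i_1}$, we obtain the length-additive factorization $v_0 = \rho\, s_j\, u_0$ with $\ell(v_0) = \ell(\rho) + 1 + \ell(u_0)$.

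Next, I consider $z := f_{i_m}^* \cdots f_{i_1}^*(x)$ and claim it is the extremal vector of weight $\rho u_0 \lambda$. Each prefix $s_{i_k} \cdots s_{i_1} u_0$ of the reduced factorization of $v_0$ is itself reduced, so $(s_{i_{k-1}} \cdots s_{i_1} u_0)^{-1} \alpha_{i_k}$ is a positive root. Since $\lambda$ is dominant, this gives $\langle \alpha_{i_k}^\vee,\, s_{i_{k-1}} \cdots s_{i_1} u_0 \lambda \rangle \ge 0$, so the intermediate extremal vector sits at the head of its $i_k$-string, whence $f_{i_k}^*$ produces the extremal vector of weight $s_{i_k} \cdots s_{i_1} u_0 \lambda$. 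Iterating, $z$ is extremal of weight $\rho u_0 \lambda$. Deleting the letter $s_j$ from the reduced factorization of $v_0$ and invoking the Subword Property yields $\rho u_0 \preceq v_0 \preceq w$, and since any element of $W$ is Bruhat-above its minimal coset representative, $\floor{\rho u_0}^\lambda \preceq \rho u_0 \preceq w$. Lemma~\ref{lem:elements} then gives $z \in \B_{\rho u_0}(\lambda) \subseteq \B_w(\lambda)$, as required.

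The main obstacle is verifying that the iterated application of $f_{i_k}^*$ starting from the extremal vector $x$ indeed produces an extremal element of the claimed weight $\rho u_0 \lambda$; this is not automatic since each $f_i^*$ reflects weight only when applied at the head of its $i$-string. The key observation is that length-additivity of $v_0 = \rho\, s_j\, u_0$ propagates to each prefix, producing the positivity statement above. Once this is in place, the proof is a direct combination of the Subword Property with Lemma~\ref{lem:elements}.
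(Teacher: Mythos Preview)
Your approach is essentially the paper's: translate the chain $x = e_{j}^* e_{i_1}^* \cdots e_{i_m}^*(y)$ into the length-additive factorization $v_0 = s_{i_m}\cdots s_{i_1} s_j\, u_0$ via Lemma~\ref{lem:path}, delete $s_j$ using the Subword Property to get $\rho u_0 \preceq v_0 \preceq w$, and conclude $z \in \B_{\rho u_0}(\lambda) \subseteq \B_w(\lambda)$ via Lemma~\ref{lem:elements}. The paper simply asserts that $z$ is extremal of weight $v_1 u\lambda$ and moves on; you attempt to justify this, which is commendable, but your justification contains a slip.

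You claim that each $s_{i_k}\cdots s_{i_1} u_0$ is a \emph{prefix} of the reduced factorization $v_0 = \rho\, s_j\, u_0$, and hence reduced. It is not a prefix: in the expression $s_{i_m}\cdots s_{i_1}\, s_j\, (\text{red.\ expr.\ for } u_0)$, the letter $s_j$ sits \emph{between} the $s_{i_k}$'s and $u_0$, so $s_{i_k}\cdots s_{i_1} u_0$ is obtained by deleting a middle letter, and such deletions need not yield reduced words (e.g.\ delete the middle letter of $s_1 s_2 s_1$). Consequently your positivity argument for $(s_{i_{k-1}}\cdots s_{i_1} u_0)^{-1}\alpha_{i_k}$ is not justified as written. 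The fix is easy and in fact makes the argument cleaner: each $f_{i_k}^*$ applied to an extremal element always returns an extremal element (either reflecting the weight or fixing it), so $z$ is extremal of weight $\sigma u_0\lambda$ for \emph{some} subword $\sigma$ of $s_{i_m}\cdots s_{i_1}$; then $\sigma u_0$ is represented by a subword of the reduced expression for $v_0$, hence $\floor{\sigma u_0}^\lambda \preceq v_0 \preceq w$, and Lemma~\ref{lem:elements} finishes. This is precisely the level of detail the paper itself glosses over.
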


\begin{proof}
    By Proposition~\ref{prop:extremal}, $\B_w(\lambda)$ is an extremal subset of $\B(\lambda)$. 
    
    Suppose $x,y \in \B_w(\lambda)$ are extremal with $x = e_{j}^* e_{i_1}^* \cdots e_{i_m}^* (y)$ and weights $\wt(x)=u\lambda$ and $\wt(y)=v\lambda$ for some $u,v \in W$ (see Fig.~\ref{fig:Dem4}). We may assume $u = \floor{u}^\lambda$ and $v = \floor{v}^\lambda$ are minimal length representatives. Since $y \in \B_w(\lambda)$, we have $\B_v(\lambda) \subset \B_w(\lambda)$. Therefore, by Lemma~\ref{lem:elements}, $v = \floor{v}^\lambda \preceq w$. 

    Let $v_0 = s_{i_m} \cdots s_{i_1} s_{j}$, so that $v = v_0 u$ with $\ell(v) = \ell(u) + \ell(v_0)$. Write $v_1 = s_{i_m} \cdots s_{i_1}$. By the Subword Property, we have $v_1 u \prec v_0 u = v \prec w$. In particular, by Lemma~\ref{lem:elements}, $\B_{v_1u}(\lambda) \subset \B_w(\lambda)$. Let  $z = f_{i_m}^* \dots f_{i_1}^* (x)$. Either $z=0$ or $z$ is extremal with weight $v_1u$ and so $z \in \B_{v_1u}(\lambda) \subset \B_w(\lambda)$. Thus, $z \in \B_w(\lambda) \sqcup \{0\}$, and so $\B_w(\lambda)$ is ideal.
\end{proof}

Given an ideal subset $X$, for every extremal element of $X$, the Demazure crystal of that extremal weight must also be contained in $X$.

\begin{lemma}\label{lem:ideal}
    If $X \subseteq \B(\lambda)$ is an ideal subset, and $y \in X$ is an extremal element with $\wt(y)=w \lambda$, then $\B_w(\lambda)\subseteq X$.
\end{lemma}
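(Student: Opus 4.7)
The plan is to proceed by strong induction on $\ell(\floor{w}^\lambda)$, taking $w = \floor{w}^\lambda$ without loss of generality. The base case $w = e$ is immediate, since the only element of weight $\lambda$ in $\B(\lambda)$ is $b_\lambda = y$, and $\B_e(\lambda) = \{b_\lambda\} \subseteq X$.

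For the inductive step with $\ell(w) = m \geq 1$, fix a reduced expression $w = s_{i_1} \cdots s_{i_m}$. By Lemma~\ref{lem:path} applied with $x = b_\lambda$, one has $y = f^*_{i_1} f^*_{i_2} \cdots f^*_{i_m}(b_\lambda)$. Setting $v = s_{i_2} \cdots s_{i_m}$ and $y' = f^*_{i_2} \cdots f^*_{i_m}(b_\lambda)$, the element $y'$ is extremal of weight $v\lambda$ and $y = f^*_{i_1}(y')$. Since $y \in X$ lies strictly below $y'$ on a non-trivial $i_1$-string and $X$ is extremal, the whole string is contained in $X$; in particular $y' \in X$. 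As $\ell(\floor{v}^\lambda) \leq m-1$, the inductive hypothesis applied to $y'$ yields $\B_v(\lambda) = \B_{\floor{v}^\lambda}(\lambda) \subseteq X$.

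To upgrade to $\B_w(\lambda) \subseteq X$, one uses $\B_w(\lambda) = \F_w\{b_\lambda\} = \{f_{i_1}^n(b) : b \in \B_v(\lambda),\, n \geq 0\} \setminus \{0\}$, and verifies, for each $b \in \B_v(\lambda) \cap X$, that the $i_1$-string of $b$ intersected with $\B_w(\lambda)$ lies in $X$. When $e_{i_1}(b) \neq 0$, extremality of $X$ immediately forces the whole string into $X$. When $b$ is extremal of weight $u\lambda$ with $u \preceq v$, the lifting property of Bruhat order gives $s_{i_1}u \preceq s_{i_1}v = w$, so $f^*_{i_1}(b)$ is the extremal element of weight $s_{i_1}u\lambda$. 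A preliminary derivation, obtained by iterating the ideal condition on pairs $(y,z)$ of extremal elements in $X$ and using the Subword/Strong Exchange Property to vary reduced expressions of $w$, shows that every extremal element in $\B(\lambda)$ of weight $\preceq w\lambda$ already lies in $X$; hence $f^*_{i_1}(b) \in X$, and extremality again covers the whole string.

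The main obstacle is the remaining subcase: $b \in \B_v(\lambda) \cap X$ with $e_{i_1}(b) = 0$ but $b$ not extremal, since neither end of the $i_1$-string of $b$ is a priori identified with an element already known to be in $X$. The plan here is to argue by contradiction. Select $c \in \B_w(\lambda) \setminus X$ of maximum weight. Extremality of $X$ together with maximality of $\wt(c)$ forces $\varepsilon_i(c) \leq 1$ for every $i$ and $e_i(c) \in X$ whenever nonzero; combined with $\B_v(\lambda) \subseteq X$, these constraints imply $c = f_{i_1}(b')$ for some $b' \in \B_v(\lambda) \cap X$ with $e_{i_1}(b') = 0$, and the preliminary extremal result rules out $b'$ itself being extremal. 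One then invokes the ideal condition on the extremal pair $(y,z)$, where $z \in X$ is a carefully chosen extremal element of $\B_v(\lambda)$ from which $b'$ is reached by a sequence of lowering operators along an adapted reduced expression of $v$, to produce a \emph{second} element of the $i_1$-string of $b'$ inside $X$. Extremality then forces the full string, and in particular $c$, into $X$, contradicting $c \notin X$. Matching the lowering path from $z$ to $b'$ with a parallel lowering path from $f^*_{i_1}(z)$, so that the ideal condition supplies the required element, is the technical heart of the argument.
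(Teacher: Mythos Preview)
Your induction scheme and the reduction to $\B_v(\lambda)\subseteq X$ agree with the paper. The divergence is in what you treat as the main content. What you call the ``preliminary derivation'' --- that every extremal-weight element of $\B_w(\lambda)$ lies in $X$ --- \emph{is} the paper's proof, not a side remark. The paper fixes one reduced word $s_{j_n}\cdots s_{j_1}$ for $w$ with $y=f^*_{j_n}\cdots f^*_{j_1}(b_\lambda)$, takes an arbitrary extremal $x$ of weight $u\lambda$ with $u\prec w$, picks a reduced subword $s_{j_{i_m}}\cdots s_{j_{i_1}}$ for $u$, and splits into two cases: if $i_m<n$ one applies $e^*_{j_n}$ to $y$ and invokes the induction hypothesis at length $n-1$; if $i_m=n$ one uses the ideal axiom \emph{once} (deleting the largest omitted index $j_k$) to produce an extremal element of $X$ of strictly shorter length, and again invokes induction. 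Your one-sentence sketch (``iterating the ideal condition \ldots using Subword/Strong Exchange'') is this argument, but it should be written out as the two explicit cases. Once every extremal-weight element is in $X$, the paper applies the induction hypothesis to each to get $\B_u(\lambda)\subseteq X$ for all $u\prec w$ and then concludes $\B_w(\lambda)\subseteq X$ directly from extremality of $X$; it does not perform your element-by-element $i_1$-string analysis.

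Your handling of the ``main obstacle'' case is where there is a genuine gap. The ideal axiom (Definition~\ref{def:ideal}) takes as input two \emph{extremal-weight} elements of $X$ and outputs another \emph{extremal-weight} element of $X$; it cannot directly produce a non-extremal second point on the $i_1$-string through a non-extremal $b'$. Your proposed fix --- choose an extremal $z\in\B_v(\lambda)$ with a lowering path to $b'$, and ``match'' it with a parallel path from $f^*_{i_1}(z)$ --- is not carried out, and there is no reason the two paths should land on the same $i_1$-string: lowering operators do not commute, and a non-extremal $b'$ is not determined by its weight. As written this is a hope, not an argument. You should instead promote the ``preliminary derivation'' to the body of the proof (with the two cases above) and then finish as the paper does, rather than attempting to force each $i_1$-string into $X$ by hand.
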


\begin{proof}
 Suppose $y \in X$ is an extremal element with $\wt(y)=w \lambda$. Since for any $w \in W$, we have $\B_w(\lambda) = \B_{\floor{w}^\lambda}(\lambda)$, it suffices to assume $w = \floor{w}^\lambda$. 
 
 If $\ell(w)=0$, then $w$ is the identity, $y = b_{\lambda}$, and $\B_w(\lambda) = \{b_{\lambda}\} \subseteq X$. Thus suppose $\ell(w)=n>0$, and by induction, assume for any extremal element $x \in X$ with $\wt(x) = u \lambda$ and $\ell(u) < n$, we have $\B_u(\lambda) \subseteq X$. 
 
 By Lemma~\ref{lem:path}, there exists a reduced expression $s_{j_n} \cdots s_{j_1} \in \rex(w)$ such that $y = f_{j_n}^* \cdots f_{j_1}^* (b_{\lambda})$. For any other extremal element $x\in\B_w(\lambda)-\{y\}$, we claim $x \in X$.
 In particular, by the claim, $X$ contains all extremal elements of $\B_w(\lambda)$. Since $X$, being ideal, is also extremal, it follows that $\B_w(\lambda)\subseteq X$. Thus it suffices to prove the claim, i.e. that $x \in X$.
 
 Let $\wt(x) = u\lambda$, where again we may assume $u = \floor{u}^{\lambda}$. By Lemma~\ref{lem:elements}, since $x \in \B_w(\lambda)$, we have $u \prec w$. Therefore, by the Subword Property, there exists $1 \le i_1 < \cdots < i_m \le n$ such that $s_{j_{i_m}} \cdots s_{j_{i_1}} \in \rex(u)$. Furthermore, by Lemma~\ref{lem:path}, $x = f_{j_{i_m}}^* \cdots f_{j_{i_1}}^* (b_{\lambda})$. 
 We consider two cases based on $i_m$.

 First, suppose $i_m < n$. Letting $u' = s_{j_{n-1}} \cdots s_{j_1}$, we have $u \preceq u' \prec w$. In particular, $\ell(u') < n$. Since $y \in X$ and $X$ is extremal, we may consider the extremal element $x' = e_{j_n}^*(y) \in X$ which, by Lemma~\ref{lem:path}, has $\wt(x') = u'\lambda$. Applying the inductive hypothesis to $x'$, we conclude $\B_{u'}(\lambda) \subset X$. Finally, by Lemma~\ref{lem:elements}, since $u \preceq u'$, we have $x \in \B_{u'}(\lambda) \subset X$ as desired.

 Next, suppose $i_m = n$, and let $k<n$ be the maximal omitted index for the subword for $u$.
 Let $u' = s_{j_{n}}  \cdots \hat{s}_{j_k} \cdots s_{j_1}$, that is, delete the $j_k$th term. Then $u \preceq u' \prec w$, and, in particular, $\ell(u') < n$. Since $y \in X$ and $X$ is ideal, we have 
 \[x' = f_{j_n}^* \cdots f_{j_{k+1}}^* e_{j_k}^* \cdots e_{j_n}^* (y) \in X. \]
 By Lemma~\ref{lem:path}, $x'$ is extremal with $\wt(x') = u'\lambda$.
 Applying the inductive hypothesis to $x'$, we conclude $\B_{u'}(\lambda) \subset X$. Finally, by Lemma~\ref{lem:elements}, since $u \preceq u'$, we have $x \in \B_{u'}(\lambda) \subset X$ as desired.
 \end{proof}

Since $W$ is poset under the strong Bruhat order, we may consider order ideals within $W$. 
We introduce the following definition which motivates our terminology.

\begin{definition}\label{def:idealcrystal}
    Given a lower order ideal $\mathcal{I} \subset W$, denote the subset of $\B(\lambda)$ of Demazure modules generated by $\mathcal{I}$ as follows,
    \[
    \B_{\mathcal{I}}(\lambda) = \bigcup_{w \in \mathcal{I}} \B_w(\lambda).
    \]    
\end{definition}

Since Demazure modules are always finite dimensional (see \cite[Lemma 8.1.4]{Kumar-book}), then $\B_\I(\lambda)$ is finite dimensional precisely when $\I$ is finitely generated (and thus finite).

Geometrically, the subset $\B_\I(\lambda)$ can be interpreted as follows. For $G$ the Kac-Moody group of $\g$ with Borel subgroup $B$ and $\lambda \in P^{+}$, let $\mathcal{L}_{\lambda}$ denote the line bundle on $G/B$ associated to the one-dimensional $B$-module acting with character $\lambda$.
The \emph{dual Polo module} $V_{\I}(\lambda)$ is the $B$-module dual to the space of global sections $\Gamma(\bigcup_{w \in \I} X_w,\mathcal{L}_\lambda)$ of $\mathcal{L}_\lambda$ over the union of Schubert varieties $X_w = \overline{BwB} \subset G/B$. 
Thus, it follows from Theorem~\ref{thm:kas-dem} that $\B_{\I} (\lambda)$ is the subset of $\B(\lambda)$ whose character precisely equals that of the dual Polo module $V_{\I}(\lambda)$. 

We now prove Theorem~\ref{thm:C}, showing that all ideal subsets arise in this manner, and consequently provide a local characterization for unions of Demazure crystals.

\begin{theorem}\label{thm:ideal}
    A subset $X \subseteq \B(\lambda)$ is ideal if and only if there exists a lower order ideal $\mathcal{I} \subset W$ such that $X = \B_{\I}(\lambda)$.
    Moreover, when this is the case, the ideal is given by
    \[ \mathcal{I} =  \{ \floor{w}^{\lambda} \in W \mid \wt(x) = w\lambda \ \text{for some extremal} \ x\in X \}. \]
\end{theorem}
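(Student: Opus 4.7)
The theorem has two directions. The "if" direction is immediate from our earlier results: by Proposition~\ref{prop:ideal} each $\B_w(\lambda)$ is ideal, and by Lemma~\ref{lem:ideal-union} unions of ideal subsets remain ideal, so $\B_\I(\lambda)$ is ideal for any lower order ideal $\mathcal{I} \subseteq W$.

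For the "only if" direction, assume $X$ is ideal and let $\mathcal{I}$ be defined as in the statement. First, $\mathcal{I}$ is a lower order ideal: for $u \in \mathcal{I}$ witnessed by an extremal $x_u \in X$, Lemma~\ref{lem:ideal} gives $\B_u(\lambda) \subseteq X$, and for any $v \preceq u$ with $v = \floor{v}^\lambda$, Lemma~\ref{lem:elements} yields $\B_v(\lambda) \subseteq \B_u(\lambda) \subseteq X$, so the extremal $x_v \in X$ witnesses $v \in \mathcal{I}$. Taking the union of Lemma~\ref{lem:ideal} over $u \in \mathcal{I}$ also yields $\B_\I(\lambda) \subseteq X$.

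The remaining containment $X \subseteq \B_\I(\lambda)$ is the main content. Given $b \in X$, I would iteratively apply head-of-string raising operators to produce a sequence $b = c_0, c_1, \ldots, c_m = b_\lambda$ with $c_k = e_{i_k}^*(c_{k-1}) \in X$ for each $k$, each step chosen so as to nontrivially raise; extremality of $X$ ensures each $c_k \in X$ since $c_{k-1}$ lies as a non-head element of the $i_k$-string containing $c_k$. Reversing yields $b = f_{i_1}^{k_1} f_{i_2}^{k_2} \cdots f_{i_m}^{k_m}(b_\lambda)$ with $k_j \geq 1$, placing $b$ in $\B_w(\lambda)$ for $w$ the Weyl element realized by the word $s_{i_1}\cdots s_{i_m}$. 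By Lemma~\ref{lem:path}, the companion element $y := f_{i_1}^* f_{i_2}^* \cdots f_{i_m}^*(b_\lambda)$ is extremal of weight $w\lambda$, so proving $y \in X$ would witness $w \in \mathcal{I}$ and hence $b \in \B_\I(\lambda)$.

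The claim $y \in X$ I would establish by induction on $m$; the base case $m = 0$ is trivial since $y = b_\lambda \in X$. For the inductive step, applying the hypothesis to $c_1 \in X$ with the truncated raising sequence $(i_2, \ldots, i_m)$ yields $y' = f_{i_2}^* \cdots f_{i_m}^*(b_\lambda) \in X$, an extremal element of weight $v'\lambda$ for some $v' \in \mathcal{I}$, reducing the problem to verifying $y = f_{i_1}^*(y') \in X$. When $y'$ is the tail of its $i_1$-string this is trivial; otherwise $y'$ is the head, and the main obstacle is to rule out the extremality scenario in which $X$ meets this $i_1$-string only at $\{y'\}$. I would resolve this case by invoking the ideal axiom on $y'$ together with a higher extremal element of $X$ reached along the raising path from $y'$: combined with Kashiwara's recursion applied to $\B_{v'}(\lambda)$ (which forces $c_1$ to sit alone as the head of its $i_1$-string in $\B_{v'}(\lambda)$ whenever $b \notin \B_{v'}(\lambda)$), the axiom transfers the presence of $b, c_1 \in X$ along the $i_1$-string of $c_1$ over to that of $y'$, forcing $f_{i_1}^*(y') = x_{s_{i_1} v'} \in X$ and thus $s_{i_1} v' \in \mathcal{I}$, closing the induction.
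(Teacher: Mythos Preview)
Your overall strategy matches the paper's exactly: the ``if'' direction, the construction of $\mathcal{I}$, and the containment $\B_\I(\lambda) \subseteq X$ via Lemma~\ref{lem:ideal} are all handled the same way. For the reverse containment, both you and the paper take $b \in X$, run an $e^*$-path up to $b_\lambda$ with index sequence $(i_1, \ldots, i_m)$, observe that $b \in \B_w(\lambda)$ for $w = s_{i_1} \cdots s_{i_m}$, and reduce everything to showing that the extremal element $y = f_{i_1}^* \cdots f_{i_m}^*(b_\lambda)$ lies in $X$.

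The difference is in how this last claim is handled. The paper dispatches it in one line: it invokes \cite[Prop.~3.3]{ADG} to produce the reduced-word expression and then asserts $y \in X$ directly from extremality of $X$. Your inductive argument correctly isolates where the difficulty lies --- after the inductive hypothesis gives $y' = f_{i_2}^* \cdots f_{i_m}^*(b_\lambda) \in X$, one must still force $f_{i_1}^*(y') \in X$ --- but the resolution you sketch does not go through as written. The ideal axiom (Definition~\ref{def:ideal}) takes two \emph{extremal} elements of $X$ connected by an $e^*$-chain and allows you to \emph{drop} the topmost operator; it does not let you \emph{append} an additional $f_{i_1}^*$ at the bottom. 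To invoke it for $f_{i_1}^*(y')$ you would need an extremal element of $X$ strictly below $y'$ along a chain beginning with $f_{i_1}^*$, which is precisely what you are trying to produce. The appeal to Kashiwara's recursion on $\B_{v'}(\lambda)$ only tells you how the $i_1$-string of $c_1$ sits inside $\B_{v'}(\lambda)$; but $c_1$ is not extremal and its $i_1$-string is disjoint from that of $y'$, so nothing transfers. This step needs either the external input the paper cites or a genuinely different argument.
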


\begin{proof}
    By Proposition~\ref{prop:ideal}, every Demazure crystal $B_w(\lambda)$ is an ideal subset of $\B(\lambda)$, and so, by Lemma~\ref{lem:ideal-union}, unions of Demazure crystals are ideal.

    Conversely, suppose $X \subset \B(\lambda)$ is an ideal subset. Define an ideal
    \[ \mathcal{I} = \{ w \in W \mid \text{some extremal } x\in X \text{ has } \wt(x) = w\lambda \}. \]
    Since $X$ is closed under raising operators, then $\mathcal{I}_X$ is a lower order ideal and, by Lemma~\ref{lem:ideal}, we have
    \[  \B_{\mathcal{I}}(\lambda) \subseteq X. \]
    To prove equality, suppose $x \in X-\B_{\mathcal{I}}(\lambda)$. Then there exist indices $i_1,\ldots,i_m$ and positive exponents $k_1,\ldots,k_m$ such that
    \[ x = f_{i_m}^{k_m} \cdots f_{i_1}^{k_1} (b_{\lambda}), \]
    where, by \cite[Prop~3.3]{ADG}, $s_{i_m} \cdots s_{i_1}$ is a reduced expression for some $w \in W$. Hence, $x \in \B_w(\lambda)$. Since $x \not\in U$, we must have $w \not\in \mathcal{I}$. However, since $X$ is extremal and $x \in X$, then we have an extremal element
    \[ y = f_{i_m}^{*} \cdots f_{i_1}^{*} (b_{\lambda}) \in X, \]
    with $\wt(y) = w \lambda$, a contradiction. Thus, $\B_{\mathcal{I}}(\lambda) = X$.
\end{proof}

It is evident that the union of ideal crystals is ideal. We note as well that the ideal property is preserved by intersections.

\begin{proposition}\label{prop:ideal-intersection}
Suppose $\lambda \in P^+$ and let $\I,\mathcal{J} \subset W$ be any lower order ideals. Then, the intersection of ideal subsets is ideal
\[
\B_{\mathcal{I}}(\lambda) \cap \B_{\mathcal{J}}(\lambda) = \B_{\mathcal{I} \cap\mathcal{J}}(\lambda).
\]
\end{proposition}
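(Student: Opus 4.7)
The plan is to prove the two set inclusions separately. The inclusion $\B_{\I\cap\mathcal{J}}(\lambda) \subseteq \B_\I(\lambda) \cap \B_\mathcal{J}(\lambda)$ is immediate from Definition~\ref{def:idealcrystal}, since $\I \cap \mathcal{J}$ is contained in each of $\I$ and $\mathcal{J}$.

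For the reverse inclusion, my first step is to verify that $\B_\I(\lambda) \cap \B_\mathcal{J}(\lambda)$ is itself an ideal subset so that Theorem~\ref{thm:ideal} can be applied. Nonemptiness is clear since $b_\lambda$ lies in both; extremality is precisely Lemma~\ref{lem:extremal-union-intersections}; and the ideal condition of Definition~\ref{def:ideal} is a local condition on pairs of extremal elements that transfers to the intersection, since for extremal $x,y \in \B_\I(\lambda) \cap \B_\mathcal{J}(\lambda)$ with $x = e_j^* e_{i_1}^* \cdots e_{i_m}^*(y)$, applying the ideal condition for $\B_\I(\lambda)$ and for $\B_\mathcal{J}(\lambda)$ separately places $f_{i_m}^* \cdots f_{i_1}^*(x)$ in $\B_\I(\lambda) \cap \B_\mathcal{J}(\lambda)$.

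Theorem~\ref{thm:ideal} then allows us to write $\B_\I(\lambda) \cap \B_\mathcal{J}(\lambda) = \B_\mathcal{K}(\lambda)$ for the lower order ideal $\mathcal{K}$ of minimal length coset representatives $w = \floor{w}^\lambda$ whose extremal element of weight $w\lambda$ appears in the intersection. It then suffices to verify $\mathcal{K} \subseteq \I \cap \mathcal{J}$: given $w \in \mathcal{K}$ with extremal witness $y$ of weight $w\lambda$, the containment $y \in \B_\I(\lambda)$ produces $u \in \I$ with $y \in \B_u(\lambda)$, and I would deduce $w \preceq \floor{u}^\lambda$, whence $w \in \I$ since $\I$ is a lower order ideal, and symmetrically $w \in \mathcal{J}$.

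The one genuinely nontrivial step is this final Bruhat comparison. My plan is to combine Lemma~\ref{lem:path} with \cite[Prop~3.3]{ADG}: any path from $b_\lambda$ to $y$ inside $\B_u(\lambda)$ traverses a reduced subword of a reduced expression for $u$, while Lemma~\ref{lem:path} forces any path realizing the extremal element of weight $w\lambda$ to traverse a reduced expression for $w$. The Subword Property then gives $w \preceq u$, and hence $w = \floor{w}^\lambda \preceq \floor{u}^\lambda$. All other ingredients are already in place from the earlier sections, so the main work is sequencing them correctly.
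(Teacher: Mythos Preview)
Your overall strategy is sound and genuinely different from the paper's. The paper argues pointwise: it first shows $\B_w(\lambda)\cap\B_u(\lambda)=\bigcup_{v\preceq w,u}\B_v(\lambda)$ for single Demazure crystals by taking a hypothetical extremal element in the difference and deriving a contradiction, and then distributes over the defining unions. You instead verify directly that the intersection satisfies the local ideal condition of Definition~\ref{def:ideal} and invoke Theorem~\ref{thm:ideal} once; this is cleaner in that it never unpacks the unions. Both approaches ultimately reduce to the same fact: an extremal element of weight $w\lambda$ lying in $\B_u(\lambda)$ forces $\floor{w}^\lambda\preceq u$.

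The one genuine issue is your proposed justification of that Bruhat comparison. You plan to take the path witnessing $y\in\B_u(\lambda)$, drop the zero exponents, and claim (via Lemma~\ref{lem:path} and \cite[Prop.~3.3]{ADG}) that the resulting subword is a reduced expression for $w$. This fails: a subword of a reduced expression need not be reduced, and Lemma~\ref{lem:path} only concerns $f^*$--paths between extremal elements, not arbitrary paths with positive exponents. Concretely, for $\mathfrak{sl}_3$ and $\lambda=2\omega_1$, the extremal element $y=f_1^2(b_\lambda)$ lies in $\B_{s_1s_2s_1}(\lambda)$ via $f_1^{1}f_2^{0}f_1^{1}(b_\lambda)$; the nonzero subword is $s_1s_1$, which is neither reduced nor a word for $s_1=\floor{w}^\lambda$. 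Conversely, the $f^*$--path from Lemma~\ref{lem:path} gives a reduced word for $w$ but not, a priori, a subword of the chosen expression for $u$, so the Subword Property does not apply.

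The fix is shorter than your attempted argument: since $\B_u(\lambda)$ is itself ideal (Proposition~\ref{prop:ideal}), Lemma~\ref{lem:ideal} applied with $X=\B_u(\lambda)$ gives $\B_w(\lambda)\subseteq\B_u(\lambda)$, and then Lemma~\ref{lem:elements} yields $\floor{w}^\lambda\preceq u$. With this substitution your proof goes through.
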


\begin{proof} 
Suppose $w,u \in W$, we will first prove that 
\[
\B_w(\lambda) \cap \B_u(\lambda) = \bigcup_{v \preceq w,u} \B_v(\lambda).
\]
By Lemma \ref{lem:elements}, if either $u \preceq \floor{w}^\lambda$ or $w\preceq \floor{u}^\lambda$ the result is trivial, so assume neither of these hold. Without loss of generality suppose $u = \floor{u}^\lambda$,$w = \floor{w}^\lambda$, and 
let $b \in \B_w(\lambda) \cap \B_u(\lambda) - \bigcup_{v \prec w,u} \B_v(\lambda)$ which, by Lemma \ref{lem:extremal-union-intersections}, we may assume is an extremal vector. So then, if $\wt(b) = v'$ we must have $b \in \B_{v'}(\lambda) \subset \B_w(\lambda)$ with $v' \preceq w$. Since $v'\not\prec w$, then necessarily $v' = w$ and thus $\B_{v'}(\lambda) = \B_w(\lambda)$. A similar deduction yields $\B_{v'}(\lambda) = \B_u(\lambda)$, a contradiction. Thus, we have that 
\[
\B_{\mathcal{I}}(\lambda) \cap \B_{\mathcal{J}}(\lambda)  = 
\bigcup_{w \in \mathcal{I}}\bigcup_{u \in \mathcal{J}}\bigcup_{v \in \langle w\rangle \cap \langle u\rangle}\B_v(\lambda)
=
\bigcup_{v \in \mathcal{I} \cap\mathcal{J} } \B_v(\lambda)=\B_{\mathcal{I} \cap\mathcal{J}}(\lambda) 
\]
which by Theorem \ref{thm:ideal} is ideal.
\end{proof}

%%%%%%%%%%%%%%%%%%%%%%%%%%%%%%%%%%%%%%%%%%%%%%%%%%%%%%%%%%%%
%  Atoms
%%%%%%%%%%%%%%%%%%%%%%%%%%%%%%%%%%%%%%%%%%%%%%%%%%%%%%%%%%%%
\section{Atomic Decompositions}

Recall that a \emph{minimal relative Schubert module} arises as the kernel of the map $\Gamma(X_w,\mathcal{L}_\lambda) \rightarrow \Gamma(\bigcup_{u \prec w} X_u,\mathcal{L}_\lambda)$. The \emph{Demazure atom} $\A_w(\lambda)$ is a certain subset of $\B(\lambda)$ whose character coincides with the dual of the minimal relative Schubert module above. 
Demazure modules and, more generally, dual Polo modules admit \emph{relative Schubert filtrations}, in which successive quotients are minimal relative Schubert modules \cite{vdk}.
Thus a $B$-module with a Schubert filtration, where successive quotients are Polo modules, necessarily also has a relative Schubert filtration. 
For $\g$ finite, this was proven geometrically in \cite[Lemma 2.2]{vdk} and crystal-theoretically in \cite[Theorem 3.4]{Armon25}. 

Characters of Demazure atoms were also considered by Lascoux and Sch\"utzenberger \cite{LaSchu90keys} under the term \emph{standard bases}.
Atomic decompositions have historically and continue to be widely studied both algebraically and combinatorially; see for instance \cite{ mason, ChoiKwon, Santos, JaconLecouvey,BucimasScrimshaw, brubaker,BSW, Blasiak, Armon25}.

Below, we show that any ideal subset decomposes as a disjoint union of Demazure atoms, thus extending Armon's result in \cite{Armon25} from complex semisimple Lie algebras to all symmetrizable Kac-Moody Lie algebras.

\begin{definition}\label{def:atom}
    For $\lambda \in P^+$ and $w \in W$, the \newword{Demazure atom} $\A_{w}(\lambda)$ is the subset of $\B(\lambda)$ given by, 
    \[
    \A_w(\lambda) = \B_w(\lambda) - \bigcup_{v\prec \floor{w}^{\lambda}} \B_v(\lambda).
    \]
\end{definition}

Unlike all other subsets considered in this paper, Demazure atoms are not generally extremal. However, they do have the following string property.

\begin{proposition}
    If $x \in \A_w(\lambda)$ and $e_i(x) \ne 0$, then $f_i^k(x) \in \A_w(\lambda) \sqcup \{0\}$ for all $k>0$.
\end{proposition}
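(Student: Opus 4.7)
The plan is to exploit the extremality of each Demazure crystal $\B_v(\lambda)$ appearing in the definition of $\A_w(\lambda)$ together with the elementary observation that, for $k\ge 1$ with $f_i^k(x)\ne 0$, the element $f_i^k(x)$ cannot be the head of its $i$-string.

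First I would show that the entire $i$-string through $x$ is contained in $\B_w(\lambda)$. Since $x\in\A_w(\lambda)\subseteq\B_w(\lambda)$ and $\B_w(\lambda)$ is extremal by Proposition~\ref{prop:extremal}, the intersection of $\B_w(\lambda)$ with the $i$-string $S$ containing $x$ is either empty, equal to $S$, or equal to $\{b\}$ with $e_i(b)=0$. The hypothesis $e_i(x)\ne 0$ rules out that $x$ is the head of $S$, so $S\cap\B_w(\lambda)=S$. In particular, $f_i^k(x)\in\B_w(\lambda)\sqcup\{0\}$ for all $k\ge 0$.

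Next I would rule out membership in $\B_v(\lambda)$ for any $v\prec\floor{w}^\lambda$. Fix such $v$ and suppose, toward a contradiction, that $f_i^k(x)\in\B_v(\lambda)$ for some $k\ge 1$ with $f_i^k(x)\ne 0$. By Proposition~\ref{prop:extremal}, $\B_v(\lambda)$ is extremal, so the same trichotomy gives either $S\cap\B_v(\lambda)=S$ or $S\cap\B_v(\lambda)=\{b\}$ with $e_i(b)=0$. However, $e_i(f_i^k(x))=f_i^{k-1}(x)\ne 0$ because $k\ge 1$, so $f_i^k(x)$ is not the head of $S$, which eliminates the singleton case. Hence $S\cap\B_v(\lambda)=S$, which forces $x\in S\subseteq\B_v(\lambda)$, contradicting $x\in\A_w(\lambda)=\B_w(\lambda)-\bigcup_{v\prec\floor{w}^\lambda}\B_v(\lambda)$.

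Combining the two steps, $f_i^k(x)\in\B_w(\lambda)$ and $f_i^k(x)\notin\B_v(\lambda)$ for every $v\prec\floor{w}^\lambda$ whenever $f_i^k(x)\ne 0$, giving $f_i^k(x)\in\A_w(\lambda)\sqcup\{0\}$ as desired. I do not anticipate any serious obstacle here; the entire argument rests on the simple fact that a lowering operator applied a positive number of times can never produce the head of an $i$-string, which is precisely what distinguishes the two nontrivial alternatives in the extremal condition.
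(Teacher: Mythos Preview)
Your argument is correct. It differs from the paper's proof in an interesting way: the paper first argues from $e_i(x)\in\B_w(\lambda)$ that $s_i w \prec w$, invokes the recursive description $\B_w(\lambda)=\F_i(\B_{s_iw}(\lambda))$, and then appeals to the definition of $\A_w(\lambda)$. Your proof instead bypasses the Bruhat relation and the recursive construction entirely, applying extremality (Proposition~\ref{prop:extremal}) not only to $\B_w(\lambda)$ but also to each $\B_v(\lambda)$ with $v\prec\floor{w}^\lambda$, and using only the trivial observation that $f_i^k(x)$ for $k\ge 1$ cannot be a string head. This is more elementary and self-contained, relying on nothing beyond Definition~\ref{def:extremal} and Proposition~\ref{prop:extremal}; the paper's route, while shorter on the page, implicitly leans on the structural fact that a full $i$-string inside $\B_w(\lambda)$ forces $s_iw\prec w$, which is not proved in the paper and requires outside input.
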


\begin{proof}
    Without lose of generality, we assume $w = \floor{w}^{\lambda}$. If $x\in \A_w(\lambda) \subset \B_w(\lambda)$, then since $\B_w(\lambda)$ is extremal by Proposition~\ref{prop:extremal}, we have $e_i(x) \in \B_w(\lambda)$. Therefore $s_i w \prec w$, and so $\B_w(\lambda) = \F_i (\B_{s_i w}(\lambda))$. The claim now follows from the definition of $\A_w(\lambda)$.
\end{proof}

It follows from the definition that any Demazure crystal is a union of Demazure atoms. In fact, the union of atoms gives a disjoint decomposition.

\begin{lemma}\label{lem:atom-disjoint}
    For $\lambda\in P^{+}$ and $w,v \in W$, if $\floor{w}^{\lambda} \ne \floor{v}^{\lambda}$, then we have $\A_w(\lambda) \cap \A_v(\lambda) = \varnothing$. In particular,  
    \begin{equation}\label{eq:atomic}
        \B_w(\lambda) = \bigsqcup_{\substack{\floor{u}^\lambda \preceq w}} \A_{\floor{u}^\lambda}(\lambda).
    \end{equation}
\end{lemma}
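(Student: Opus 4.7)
The plan is to first observe that $\B_w(\lambda) = \B_{\floor{w}^\lambda}(\lambda)$ implies $\A_w(\lambda) = \A_{\floor{w}^\lambda}(\lambda)$, so throughout I may assume $w$ and $v$ are already minimal length coset representatives. Both parts of the statement then rest on intersection control provided by Proposition \ref{prop:ideal-intersection}, together with a routine Bruhat-order induction.

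For the disjointness assertion, fix distinct minimal coset representatives $w \ne v$ and suppose for contradiction that there exists $x \in \A_w(\lambda) \cap \A_v(\lambda)$. Since $x$ lies in $\B_w(\lambda) \cap \B_v(\lambda)$, Proposition \ref{prop:ideal-intersection} furnishes some $u$ with $u \preceq w$ and $u \preceq v$ such that $x \in \B_u(\lambda)$. There are two cases. If $u \ne w$, then $u \prec w$, which means $x \in \bigcup_{u' \prec w} \B_{u'}(\lambda)$ and contradicts $x \in \A_w(\lambda)$. If $u = w$, then $w \preceq v$, and since $w \ne v$ we get $w \prec v$; hence $x \in \B_w(\lambda) \subseteq \bigcup_{u' \prec v} \B_{u'}(\lambda)$, contradicting $x \in \A_v(\lambda)$. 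In either case we reach a contradiction, so $\A_w(\lambda) \cap \A_v(\lambda) = \varnothing$.

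For the atomic decomposition \eqref{eq:atomic}, the disjointness of the union is immediate from what I just proved. The containment $\supseteq$ follows from Lemma \ref{lem:elements}: whenever $\floor{u}^\lambda \preceq w$, we have $\A_{\floor{u}^\lambda}(\lambda) \subseteq \B_{\floor{u}^\lambda}(\lambda) \subseteq \B_w(\lambda)$. For the reverse containment $\subseteq$, I would induct on $\ell(\floor{w}^\lambda)$. The base case $\floor{w}^\lambda = e$ reduces to $\B_w(\lambda) = \{b_\lambda\} = \A_e(\lambda)$. For the inductive step, Definition \ref{def:atom} gives
\[
\B_w(\lambda) = \A_{\floor{w}^\lambda}(\lambda) \;\cup\; \bigcup_{v \prec \floor{w}^\lambda} \B_v(\lambda),
\]
and applying the inductive hypothesis to each $\B_v(\lambda)$ on the right (where $\ell(\floor{v}^\lambda) \le \ell(v) < \ell(\floor{w}^\lambda)$) expresses each such piece as a union of atoms indexed by $\floor{u}^\lambda \preceq v \prec \floor{w}^\lambda$; these indices are precisely the $\floor{u}^\lambda \prec \floor{w}^\lambda$, since conversely any such $\floor{u}^\lambda$ serves as its own witness $v$.

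The only delicate step is the disjointness case analysis, and here the main leverage comes from Proposition \ref{prop:ideal-intersection}, which translates the set-theoretic intersection of two Demazure crystals back into a union of Demazure crystals indexed by lower elements in Bruhat order. Once that translation is in hand, everything else is combinatorics on minimal length coset representatives.
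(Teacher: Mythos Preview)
Your proof is correct, but the route differs from the paper's. The paper argues directly via path structure: given $x \in \A_w(\lambda)$, it writes $x = f_{j_n}^{k_n}\cdots f_{j_1}^{k_1}(b_\lambda)$ with positive exponents and observes that, since $x \notin \B_u(\lambda)$ for any $u \prec w$, the word $s_{j_n}\cdots s_{j_1}$ must itself be a reduced expression for $w$; thus $w$ is recovered intrinsically from $x$, forcing $w=v$ whenever $x$ also lies in $\A_v(\lambda)$. You instead invoke Proposition~\ref{prop:ideal-intersection} (with principal ideals) to write $\B_w(\lambda)\cap\B_v(\lambda)$ as a union of $\B_u(\lambda)$ with $u\preceq w,v$, and then a short Bruhat case split finishes it. Your argument is cleaner and more structural, exploiting machinery already in place; the paper's is more self-contained and yields the extra byproduct that any minimal-length $f$-path to $x\in\A_w(\lambda)$ spells out a reduced word for $w$. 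The paper does not spell out the decomposition~\eqref{eq:atomic} separately, treating it as immediate; your induction on $\ell(\floor{w}^\lambda)$ makes that step explicit and is fine.
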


\begin{proof}
    Without loss of generality, we may assume $w = \floor{w}^{\lambda}$. Let $x \in \A_w(\lambda) \subset \B_w(\lambda)$. Since $x \in \B_w(\lambda)$, we may write 
    \[ x = f_{j_n}^{k_n} \cdots f_{j_1}^{k_1} (b_{\lambda}) \]
    where each $k_i > 0$ and $s_{j_1} \cdots s_{j_n}$ is a subword of some $\rho \in \rex(w)$. However, since $x \in \A_w(\lambda)$ implies $x \not\in\B_u(\lambda)$ for any $u \prec w$, we also have $s_{j_1} \cdots s_{j_n} \not\in \rex(u)$ for any $u \prec w$. Therefore, $s_{j_1} \cdots s_{j_n} \in \rex(w)$. In particular, any minimal length path from $b_\lambda$ to $x$ must have length $n$ and give rise to a reduced expression for $w$.
    
    Therefore, if $x \in \A_w(\lambda) \cap \A_v(\lambda)$, where again we assume $v = \floor{v}^{\lambda}$, then by the same argument, we have $s_{j_1} \cdots s_{j_n} \in \rex(v)$. Thus $w=v$.
\end{proof}

Finally, we prove the crystal-theoretic version of the fact that Polo modules admit relative Schubert filtrations.

\begin{theorem}\label{thm:atom-positive}
    For any lower order ideal $\mathcal{I} \subset W$ and $\lambda \in P^+$, the ideal subset $\B_{\mathcal{I}}(\lambda)$ decomposes into Demazure atoms as, 
    \[\B_{\mathcal{I}}(\lambda) = \bigsqcup_{v \in \mathcal{I}} \A_v(\lambda).\] 
\end{theorem}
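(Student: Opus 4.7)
The plan is to combine the individual atomic decomposition from Lemma~\ref{lem:atom-disjoint} with the defining union of $\B_\I(\lambda)$, and then use the lower-ideal property of $\I$ to identify the resulting index set.

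First I would unfold both definitions to obtain
\[
\B_\I(\lambda) \;=\; \bigcup_{w\in\I}\B_w(\lambda) \;=\; \bigcup_{w\in\I}\;\bigsqcup_{\floor{u}^\lambda\preceq w}\A_{\floor{u}^\lambda}(\lambda).
\]
The task then reduces to showing that, as $w$ ranges over $\I$, the resulting collection of atom indices coincides with $\{\floor{v}^\lambda : v\in\I\}$, and that the outer union is disjoint.

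For the index-set identification, the key observation is that $\floor{v}^\lambda\preceq v$ in Bruhat order for every $v\in W$: the stabilizer $W_\lambda$ is the standard parabolic generated by $\{s_i : \langle\alpha_i^\vee,\lambda\rangle = 0\}$, so writing $v=\floor{v}^\lambda\cdot w'$ with $w'\in W_\lambda$ and lengths adding exhibits a reduced expression for $\floor{v}^\lambda$ as an initial subword of one for $v$, whence the Subword Property applies. Consequently, if $v\in\I$ then $\floor{v}^\lambda\in\I$ because $\I$ is a lower order ideal, and conversely every index $\floor{u}^\lambda\preceq w\in\I$ itself lies in $\I$ for the same reason. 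Together with the identity $\A_v(\lambda)=\A_{\floor{v}^\lambda}(\lambda)$, which is immediate from Definition~\ref{def:atom} and $\B_v(\lambda)=\B_{\floor{v}^\lambda}(\lambda)$, this rewrites the union as $\bigcup_{v\in\I}\A_v(\lambda)$.

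Disjointness then follows directly from Lemma~\ref{lem:atom-disjoint}: atoms with distinct minimal-length coset representatives are disjoint, and two indices $v,v'\in\I$ sharing a minimal-length representative yield the same atom and thus contribute only once. I do not anticipate a substantive obstacle; the only care required is consistent bookkeeping of minimal-length coset representatives when the stabilizer $W_\lambda$ is nontrivial, which is handled uniformly by the $\floor{\cdot}^\lambda$ notation.
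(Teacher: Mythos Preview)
Your proposal is correct and follows essentially the same route as the paper: unfold the definition of $\B_\I(\lambda)$, apply the atomic decomposition \eqref{eq:atomic} from Lemma~\ref{lem:atom-disjoint} to each $\B_w(\lambda)$, collapse the index set using that $\I$ is a lower order ideal, and invoke the disjointness statement of the same lemma. The only difference is that you spell out the coset-representative bookkeeping (that $\floor{v}^\lambda\preceq v$ and $\A_v(\lambda)=\A_{\floor{v}^\lambda}(\lambda)$) which the paper leaves implicit.
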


\begin{proof}
This follows directly from Lemma \ref{lem:atom-disjoint} since by definition, 
\[
\B_\I(\lambda) = \bigcup_{w \in \I} \B_w(\lambda) =
\bigcup_{w \in \I}
\bigsqcup_{\substack{\floor{v}^\lambda \preceq w}} \A_{\floor{v}^\lambda}(\lambda)=
\bigcup_{v \in \I} \A_v(\lambda) = \bigsqcup_{v \in \I} \A_v(\lambda).
\]
\end{proof}

Thus, combining Theorem \ref{thm:atom-positive} with Proposition~\ref{prop:ideal-intersection} yields the following decomposition of intersections of ideal subsets.

\begin{corollary}\label{cor:Dem-atom-intersection}
    For any $\lambda \in P^+$ and lower ideals $\I, \mathcal{J} \subset W$, we have
    \[
\B_\I(\lambda) \cap \B_{\mathcal{J}}(\lambda) = \bigsqcup_{v \in \I \cap \mathcal{J}} \A_v(\lambda).
    \]
\end{corollary}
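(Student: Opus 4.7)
The plan is to derive this corollary by directly chaining together the two structural results already proved in the preceding sections. Since $\I \cap \mathcal{J}$ is itself a lower order ideal of $W$ (the intersection of two lower order ideals in any poset is a lower order ideal), both sides of the claimed equality are well-defined subsets of $\B(\lambda)$.

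First, I would invoke Proposition~\ref{prop:ideal-intersection}, which identifies the intersection of two ideal subsets with the ideal subset associated to the intersection of the defining lower order ideals:
\[
\B_\I(\lambda) \cap \B_{\mathcal{J}}(\lambda) \;=\; \B_{\I \cap \mathcal{J}}(\lambda).
\]
Then I would apply Theorem~\ref{thm:atom-positive} to the lower order ideal $\I \cap \mathcal{J}$ to get the disjoint atomic decomposition
\[
\B_{\I \cap \mathcal{J}}(\lambda) \;=\; \bigsqcup_{v \in \I \cap \mathcal{J}} \A_v(\lambda).
\]
Chaining these two equalities yields exactly the statement of the corollary.

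There is no real obstacle here, since both the required ingredients are established. The only point worth mentioning explicitly is that $\I \cap \mathcal{J}$ is a lower order ideal, which is immediate: if $v \preceq u$ and $u \in \I \cap \mathcal{J}$, then $v \in \I$ and $v \in \mathcal{J}$, hence $v \in \I \cap \mathcal{J}$. This ensures that Theorem~\ref{thm:atom-positive} applies to $\I \cap \mathcal{J}$ without modification.
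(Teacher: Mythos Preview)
Your proposal is correct and follows exactly the approach the paper indicates: the corollary is stated as a direct combination of Proposition~\ref{prop:ideal-intersection} and Theorem~\ref{thm:atom-positive}, which is precisely the chaining you carry out. Your explicit remark that $\I \cap \mathcal{J}$ is again a lower order ideal is a harmless clarification of a point the paper leaves implicit.
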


In particular, for $\g = \mathfrak{sl}_n$, Theorem~\ref{thm:atom-positive} asserts that the character of a Polo module is a nonnegative sum of Demazure atom polynomials. 

%%%%%%%%%%%%%%%%%%%%%%%%%%%%%%%%%%%%%%%%%%%%%%%%%%%%%%%%%%%%
%  Demazure
%%%%%%%%%%%%%%%%%%%%%%%%%%%%%%%%%%%%%%%%%%%%%%%%%%%%%%%%%%%%
\section{Demazure subsets of highest weight crystals} 

We now return to Demazure crystals and prove Theorems \ref{thm:A} and \ref{thm:B}.
We begin with an immediate corollary to Theorem~\ref{thm:ideal} characterizing Demazure crystals.

\begin{corollary}
    A subset $X \subset \B(\lambda)$ is a Demazure crystal if and only if $X$ equals some ideal subset $\B_\I(\lambda)$ generated by a principal lower order ideal $\I$.
\end{corollary}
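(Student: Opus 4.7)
The plan is to derive this immediately from Theorem~\ref{thm:ideal} together with Lemma~\ref{lem:elements}, which controls containment of Demazure crystals via Bruhat order. The main content is the elementary observation that $\B_w(\lambda)$ already absorbs every Demazure crystal indexed by an element below $w$ in Bruhat order.

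For the forward direction, suppose $X = \B_w(\lambda)$ for some $w \in W$. Let $\I = \{u \in W : u \preceq w\}$ be the principal lower order ideal generated by $w$. For each $u \in \I$ we have $\floor{u}^\lambda \preceq u \preceq w$, so Lemma~\ref{lem:elements} yields $\B_u(\lambda) \subseteq \B_w(\lambda)$. Taking the union over $u \in \I$ gives $\B_\I(\lambda) \subseteq \B_w(\lambda)$, and the reverse inclusion is immediate since $w \in \I$. Hence $X = \B_\I(\lambda)$ with $\I$ principal.

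For the converse, suppose $X = \B_\I(\lambda)$ where $\I = \{u \in W : u \preceq w\}$ is the principal ideal generated by some $w \in W$. Exactly the same inclusions from Lemma~\ref{lem:elements} give $\B_\I(\lambda) = \bigcup_{u \preceq w} \B_u(\lambda) = \B_w(\lambda)$, so $X$ is a Demazure crystal.

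There is essentially no obstacle here: the statement is the specialization of Theorem~\ref{thm:ideal} under the extra hypothesis that $\I$ has a unique maximum. I would simply present the two inclusions above in one or two sentences, perhaps noting that the characterization of principal ideals in $W$ as those with a unique maximum element is what reduces a union of Demazure crystals to a single one.
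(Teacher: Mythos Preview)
Your proof is correct and matches the paper's intent: the corollary is stated without proof as an immediate consequence of Theorem~\ref{thm:ideal}, and your use of Lemma~\ref{lem:elements} to collapse $\bigcup_{u\preceq w}\B_u(\lambda)$ to $\B_w(\lambda)$ is exactly the intended one-line argument.
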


Equivalently, we can use the following condition to determine when an ideal subset is generated by a principal ideal.

\begin{definition}\label{def:orderly}
  An extremal subset $X \subseteq \B(\lambda)$ is \newword{principal} if there exists an extremal  $x \in X$ such that for all extremal  $y\in X$, we have $\wt(x) \preceq \wt(y)$.
\end{definition}

It is straightforward to show that Demazure subsets are principal.

\begin{proposition}\label{prop:orderly}
    For any $\lambda\in P^{+}$ and $w\in W$, the Demazure crystal $\B_w(\lambda) \subseteq \B(\lambda)$ is a principal subset.
\end{proposition}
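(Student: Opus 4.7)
The plan is to exhibit the unique extremal element of weight $w\lambda$ as the principal element of $\B_w(\lambda)$ and verify the weight comparison by appealing to the ideal machinery of \S5. Without loss of generality, I would first replace $w$ by its minimal length coset representative $\floor{w}^\lambda$, which leaves $\B_w(\lambda)$ unchanged.

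First, take $x \in \B_w(\lambda)$ to be the unique extremal element of weight $w\lambda$. Its existence and membership in $\B_w(\lambda)$ follow from Lemma~\ref{lem:path} applied to any reduced expression $s_{j_n}\cdots s_{j_1} \in \rex(w)$, which realizes $x = f_{j_n}^* \cdots f_{j_1}^*(b_\lambda)$ directly as an element of $\F_w\{b_\lambda\}$ in the sense of Definition~\ref{def:Dem}. Next, for any extremal $y \in \B_w(\lambda)$ with $\wt(y) = v\lambda$ and $v = \floor{v}^\lambda$, I would use that $\B_w(\lambda)$ is ideal by Proposition~\ref{prop:ideal}. Then Lemma~\ref{lem:ideal} applied to $y$ yields $\B_v(\lambda) \subseteq \B_w(\lambda)$, and Lemma~\ref{lem:elements} gives $v \preceq w$ in Bruhat order. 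This is precisely the comparison required by Definition~\ref{def:orderly} under the identification of extremal weights $v\lambda$ with the minimal length coset representatives $\floor{v}^\lambda \in W/W_\lambda$.

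A more conceptual (and essentially one-line) alternative is to invoke Theorem~\ref{thm:ideal}: the Demazure crystal $\B_w(\lambda) = \B_{\langle w\rangle}(\lambda)$ is the ideal subset associated to the principal lower order ideal $\langle w\rangle \subseteq W$. Since $\langle w\rangle$ has a unique Bruhat-maximum, the extremal element $x$ of weight $w\lambda$ is automatically principal in $\B_w(\lambda)$.

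There is essentially no substantive obstacle here; the proposition is a direct translation of the Bruhat-order structure of Demazure crystals (already encoded in Lemma~\ref{lem:elements} and Theorem~\ref{thm:ideal}) into the language of Definition~\ref{def:orderly}. The only mild care required is to interpret the weight order $\preceq$ consistently with Bruhat order on minimal length coset representatives, and to use the minimal length representative throughout so that Lemma~\ref{lem:elements} applies verbatim.
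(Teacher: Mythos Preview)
Your proposal is correct and follows essentially the same route as the paper: both invoke Proposition~\ref{prop:ideal} to know $\B_w(\lambda)$ is ideal, then apply Lemma~\ref{lem:ideal} to an arbitrary extremal $y$ to obtain $\B_v(\lambda)\subseteq\B_w(\lambda)$, and finish with Lemma~\ref{lem:elements} to deduce $v\preceq w$. Your added care in constructing the principal element $x$ via Lemma~\ref{lem:path} and in handling minimal length coset representatives is appropriate, and your alternative via Theorem~\ref{thm:ideal} is a valid shortcut.
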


\begin{proof}
    By Proposition~\ref{prop:extremal}, $\B_w(\lambda)$ is an extremal subset. Suppose $y \in \B_w(\lambda)$ is extremal, say with $\wt(y) = u\lambda$ for some $u = \floor{u}^{\lambda}$. By Proposition~\ref{prop:ideal}, $\B_w(\lambda)$ is an ideal subset, and so by Lemma~\ref{lem:ideal}, $\B_u(\lambda) \subseteq \B_w(\lambda)$.
    Therefore, by Lemma~\ref{lem:elements}, $u = \floor{u}^{\lambda} \preceq w$. In particular, $\wt(x) \preceq \wt(y)$ for $x\in\B_w(\lambda)$ extremal of weight $w$.
\end{proof}

Both the ideal and principal conditions are necessary for Demazure subsets, though neither alone is enough to ensure that a subset is Demazure. For example, for $\mathfrak{sl}_3$, consider the highest weight crystal $\B(\lambda)$, where $\lambda = \omega_1 + \omega_2$ is the sum of the first two fundamental weights. The subset
\[ X_1 = \{ b_{\lambda}, f_1(b_{\lambda}), f_2(b_{\lambda})\} \cong \B_{s_1}(\lambda) \cup \B_{s_2}(\lambda) \]
is ideal but not principal, and the subset
\[ X_2 = \{ b_{\lambda}, f_1(b_{\lambda}), f_2 f_1 (b_{\lambda}), f_2^2 f_1 (b_{\lambda}) \} \]
is principal but not ideal. Taken together, however, these two conditions are sufficient to ensure a subset is a Demazure subset.

Together with Theorem \ref{thm:ideal}, the following establishes Theorem \ref{thm:A}.

\begin{theorem}\label{thm:demazure}
    A subset $X \subseteq \B(\lambda)$ is ideal and principal if and only if $X = \B_w(\lambda)$ for some $w \in W$.
\end{theorem}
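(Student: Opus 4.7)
The forward direction is essentially already established: if $X = \B_w(\lambda)$, then $X$ is ideal by Proposition~\ref{prop:ideal} and principal by Proposition~\ref{prop:orderly}. So the whole content lies in the converse.

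For the converse, suppose $X \subseteq \B(\lambda)$ is ideal and principal. The first step is to invoke Theorem~\ref{thm:ideal}, which tells us that every ideal subset has the form $X = \B_{\I}(\lambda) = \bigcup_{u \in \I} \B_u(\lambda)$ for the lower order ideal
\[
\I = \{\floor{u}^\lambda \mid \wt(y) = u\lambda \text{ for some extremal } y \in X\}.
\]
Thus the problem reduces to showing that principality of $X$ forces $\I$ to be a \emph{principal} lower order ideal in $W$, i.e.\ generated by a single element under Bruhat order.

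The key translation step uses the correspondence recalled at the start of \S4: for extremal weights, $u\lambda \le w\lambda$ in dominance order if and only if $w \preceq u$ in Bruhat order. By principality (Definition~\ref{def:orderly}), there is an extremal element $x \in X$ whose weight $\wt(x) = w\lambda$ (with $w = \floor{w}^\lambda$) satisfies $\wt(x) \preceq \wt(y)$ for all extremal $y \in X$. Translating via this dominance--Bruhat duality, this says precisely that $v \preceq w$ for every $v \in \I$. Since $w \in \I$ as well (because $x$ itself is extremal), $w$ is the Bruhat-maximum of $\I$, so $\I = \{v \in W \mid v \preceq w\}$ is the principal lower order ideal generated by $w$.

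Finally, Lemma~\ref{lem:elements} gives $\B_v(\lambda) \subseteq \B_w(\lambda)$ for every $v \preceq w$, so
\[
X = \B_{\I}(\lambda) = \bigcup_{v \preceq w} \B_v(\lambda) = \B_w(\lambda),
\]
as desired. The main subtlety to keep straight is the order-reversal in the dominance--Bruhat correspondence: a principal subset is the one whose unique extremal element of \emph{minimal} weight (in dominance) corresponds to the \emph{maximal} Bruhat-representative in $\I$. Once that translation is made, the conclusion is immediate from Theorem~\ref{thm:ideal} and Lemma~\ref{lem:elements}.
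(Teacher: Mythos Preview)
Your proposal is correct and follows essentially the same route as the paper's proof: both invoke Propositions~\ref{prop:ideal} and~\ref{prop:orderly} for the forward direction, and for the converse both apply Theorem~\ref{thm:ideal} to write $X = \B_{\I}(\lambda)$, use principality to conclude that $\I$ is generated by a single Bruhat-maximal element $w$, and then finish with Lemma~\ref{lem:elements}. The only difference is cosmetic: you spell out the dominance--Bruhat order reversal explicitly, whereas the paper passes directly from $\wt(x) \preceq \wt(y)$ to $\floor{u}^{\lambda} \preceq w$ without comment.
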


\begin{proof}
    Propositions~\ref{prop:ideal} and \ref{prop:orderly} show that $\B_w(\lambda)$ is ideal and principal.
    
    Conversely, suppose $X$ is an ideal and principal subset of $\B(\lambda)$. 
    By Theorem~\ref{thm:ideal}, we have
    \[ X = \bigcup_{u \in \mathcal{I}} \B_u(\lambda), \]
    where the lower order ideal $\mathcal{I} \subset W$ is given by
    \[ \mathcal{I} = \bigcup_{\substack{y \ \text{extremal} \\ \wt(y) = u\lambda}} \{ \floor{u}^{\lambda} \}. \]  
    Since $X$ is principal, there exists $x\in X$ with $\wt(x) = w\lambda$ such that for all extremal $y\in X$ with $\wt(y) = u\lambda$, we have $\floor{u}^{\lambda} \preceq w$. In particular, for each $u \in \mathcal{I}$, we have $u \preceq w$, and so since $w\in \mathcal{I}$, we conclude that $\mathcal{I} = \langle w \rangle$ is principal. Furthermore, by Lemma~\ref{lem:elements}, for $u\in\mathcal{I}$, we have $\B_u(\lambda) \subseteq \B_w(\lambda)$. Therefore $X = \B_w(\lambda)$ is a Demazure subset. 
\end{proof}

In particular, if an ideal subset $\B_{\I}(\lambda)$ contains an element of extremal weight $w\lambda$, say with $w = \floor{w}^{\lambda}$, then $w \in \I$. Therefore,
\[ \B_{\I}(\lambda) = \B_w(\lambda) \sqcup Y, \]
where $Y$ is empty if and only if $\I$ is principally generated by $w$. Thus the  character of $\B_{\I}(\lambda)$ is equal to the character of $\B_w(\lambda)$ if and only if $\I$ is principally generated by $w$. This establishes Theorem~\ref{thm:B}. That is, the character of an ideal crystal determines whether it is a Demazure crystal. 

%%%%%%%%%%%%%%%%%%%%%%%%%%%%%%%%%%%%%%%%%%%%%%%%%%%%%%%%%%%%
%  Acknowledgments
%%%%%%%%%%%%%%%%%%%%%%%%%%%%%%%%%%%%%%%%%%%%%%%%%%%%%%%%%%%%

\subsection*{Acknowledgments} We thank Dora Woodruff and Rosa Paten for prompting us to characterize Demazure crystals in general type.

%%%%%%%%%%%%%%%%%%%%%%%%%%%%%%%%%%%%%%%%%%%%%%%%%%%%%%%%%%%%
%  Bibliography
%%%%%%%%%%%%%%%%%%%%%%%%%%%%%%%%%%%%%%%%%%%%%%%%%%%%%%%%%%%%

\bibliographystyle{plain}
\bibliography{main}

\end{document}